\theoremstyle{plain}
\newtheorem{theorem}{Theorem}[section]
\newtheorem{lemma}{Lemma}[section]
\theoremstyle{definition}
\newtheorem{definition}{Definition}
\newtheorem{remark}{\textup{Remark}} 
\newtheorem{example}{\textit{Example}} 
\newtheorem{corollary}{Corollary}
\newtheorem{proposition}{Proposition}
\numberwithin{equation}{section}
\newcommand{\cls}{\mathrm{cls}}    
\begin{document}

\title[Hawaiian homology]%
{On Hawaiian homology groups}
\author[Hamid Torabi, Hanieh Mirebrahimi \and Ameneh Babaee]%
{Hamid Torabi*, Hanieh Mirebrahimi* \and Ameneh Babaee*}

\newcommand{\acr}{\newline\indent}

\address{\llap{*\,}Department of Pure Mathematics, Center of Excellence in Analysis on Algebraic Structures, Ferdowsi University of
Mashhad,\\
P.O.Box 1159-91775, Mashhad, Iran.}
\email{h.torabi@ferdowsi.um.ac.ir}
\email{h\_mirebrahimi@um.ac.ir (H. Mirebrahimi)} \email{am.babaee@mail.um.ac.ir (A. Babaee)}



\subjclass[2010]{Primary 55Q05, 55Q20; Secondary 55P65, 55Q52} 
\keywords{Hawaiian group, singular homology, archipelago space, Higmann-complete group, cotorsion group}

\begin{abstract}
In this paper, we introduce a kind of  homology which we call Hawaiian homology to study and classify pointed topological spaces. The Hawaiian homology group has advantages of Hawaiian groups. Moreover, the first Hawaiian homology group is isomorphic to the abelianization of the first Hawaiian group for path-connected and locally path-connected topological spaces. Since Hawaiian homology has concrete elements and abelian structure, its calculations are more routine. Thus we use  Hawaiian homology groups to compare Hawaiian groups, and then we obtain some information about Hawaiian groups of some wild topological spaces.
\end{abstract}

\maketitle

\section{Introduction and Motivation}

Homology is a well-known useful tool of algebraic topology. Since homology has  concrete interpretation, different fields of science apply it as an algebraic modelling of geometric properties to study their observations mathematically \cite{Ghr}.
Homology is a general way of associating a sequence of algebraic objects, such as abelian groups or modules, with other mathematical objects such as topological spaces. Homology groups were originally defined in algebraic topology and then it is generalized  in a wide variety of other contexts, such as abstract algebra, groups, Lie algebras, Galois theory, and algebraic geometry.

The original motivation for defining homology groups was  to distinguish shapes  by examining their holes.
Homology group has a famous characteristic, the Betti number, which is known as the various counting the number of holes of the spaces, and by this fact, the Betti number helps to classify spaces. The $n$th Betti number is defined as the rank of the $n$th homology group of the given topological space.

There are many different homology theories, each of which has its advantages, applications, and defects \cite{Ada, Rot}. A particular type of mathematical object, such as a topological space or a group, may have one or more associated homology theories. When the underlying object has a geometric interpretation as topological spaces do, the $n$th homology group represents the behavior in the dimension $n$. Most homology groups or modules may be formulated as derived functors on appropriate abelian categories, measuring the failure of a functor to be exact. 

In Section 2, we define a Hawaiian homology similar to the singular homology theory, with the difference whose simplexes are shrinking sequences of singular simplexes tending to the base point. Thus, it is natural that Hawaiian homology groups depend on the choice of the base point. More precisely, the Hawaiian homology is a covariant functor from the category of pointed topological spaces to the category of abelian groups. We see that Hawaiian homology groups depend on the local behavior of the spaces at their base points. This is a great difference between the Hawaiian homology and the singular homology whose structure depends only on the homotopy kind of  space, while the base point has an essential role in calculating the Hawaiian homology. In fact, the Hawaiian homology is an invariant of those homotopies preserving the base point called pointed homotopies. We present some examples to clarify the effects of choice of the base point and pointed homotopies.

Also, we investigate basic properties of the Hawaiian homology, including the Hawaiian homology of the singletons, disjoint union, homotopic maps, and so on.
Then we present a close relation between the Hawaiian homology group and Hawaiian group of topological spaces. In fact, the first Hawaiian homology group equals the abelianization of the first Hawaiian group for any path-connected and locally path-connected topological space. The  $n$th Hawaiian group  of a pointed topological space was defined by Karimov and  Repov\v{s} \cite{KarRep}  as the set of all pointed  homotopy classes of continuous maps with a group operation coming from the operation
of  $n$th homotopy groups. The $n$th Hawaiian group, defined below, is a covariant functor from the category of pointed topological spaces to the category of groups, denoted by ${\mathcal{H}}_n(X,x_0)$, where $(X, x_0)$ is a pointed topological space.

\begin{definition}[\cite{KarRep}]
Let $(X, x_0)$ be a pointed space, let $[-]$ denote the class of pointed homotopy,  and let $n = 1, 2, \ldots$. Then the $n$th Hawaiian group of $(X, x_0)$ is defined by $\mathcal{H}_n(X, x_0) = \{[f]:\  f:(\mathbb{HE}^n , \theta) \to (X, x_0)\}$ with the multiplication induced by $(f*g)|_{\mathbb{S}_k^n} = f|_{\mathbb{S}_k^n} \cdot g|_{\mathbb{S}_k^n}$ ($k \in \mathbb{N}$) for any $[f], [g] \in \mathcal{H}_n(X, x_0)$, where $\cdot$ denotes the concatenation of $n$-loops.
\end{definition}

The Hawaiian functor has some advantages over  other well-known covariant functors from the category of topological spaces to the category of groups. Unlike homotopy and homology groups, Hawaiian groups do not preserve free homotopy equivalences. As an example, consider the cone space $C(\mathbb{HE}^1)$, where $\mathbb{HE}^1$ denotes the one-dimensional Hawaiian earring. Since cone spaces are contractible, their homotopy, homology, and cohomology groups are trivial \cite{KarRep}, but  the first Hawaiian group of $C(\mathbb{HE}^1)$ is uncountable \cite{KarRep}.
Also, by using this functor, we can study some local behaviors of spaces. For instance, if $X$ has a countable local basis at $x_0$ and  the $n$th Hawaiian group ${\mathcal{H}}_n (X,x_0)$ is countable, then $X$ is locally $n$-simply connected at $x_0$
(see \cite[Theorem 2]{KarRep}).
Moreover, let $X$ have a countable local basis at $x_0$. Then $\mathcal{H}_n(CX, x)$ is trivial if and only if $X$ is locally $n$-simply connected at $x_0$ and it is uncountable otherwise \cite[Corollaries 2.16 and 2.17]{BabMas1}. Hence, unlike homotopy groups, Hawaiian groups  depend on the behavior of the space at the base point. In this regard, there are path-connected spaces  whose Hawaiian groups are not isomorphic at different points, such as the $n$-dimensional Hawaiian earring, where $n \ge 2$ (see \cite[Corollary 2.11]{BabMas1}).
In this paper, we show that the Hawaiian homology groups have the advantages of Hawaiian groups, from the viewpoint of homology.
Moreover, Hawaiian homology groups have more concrete elements, and also they can be computed by techniques of abelian groups (see theorem \ref{th4.2nn}), while there rarely exist similar techniques to calculate Hawaiian groups. By using these facts, we use Hawaiian homology groups to study and classify pointed topological spaces; for instance, the first Hawaiian homology group of the Harmonic archipelago at the origin is not isomorphic to  the first Hawaiian homology group at any point else. Also since the first Hawaiian homology group for any locally path-connected space is the abelianization of the first Hawaiian group, it can help us to investigate the structure of Hawaiian groups. Therefore, by this fact that the first Hawaiian homology groups are not isomorphic  at different points,  the first Hawaiian groups of the Harmonic archipelago are not isomorphic.

In Section 3, by employing Hawaiian homology groups, we compare Hawaiian groups of some pointed spaces, for instance, the Harmonic archipelago, Griffiths space, Hawaiian earrings, and so on. Moreover, we calculate Hawaiian homology groups of the Harmonic archipelago and prove that the first Hawaiian homology and the first singular homology groups of the Harmonic archipelago are isomorphic. The Harmonic archipelago is an element of a class of spaces called archipelago spaces constructed by attaching loops to some weak join of spaces. Hawaiian earrings belong to the class of weak join spaces defined as follows.

\begin{definition}[\cite{EdaKaw, BarMil}]
The weak join of a family of spaces $\{(X_i, x_i); i \in I\}$, denoted by $(X, x_*) = \widetilde{\bigvee}_{i \in I} (X_i, x_i)$, is the underlying space of wedge space $(\hat{X}, \hat{x}_*) = \bigvee_{i \in I} (X_i, x_i)$ with the weak topology with respect to $X_i$'s, except at the common point. Every open neighborhood in $X$ at $x_*$ is of the form $\bigcup_{i \in F} U_i \cup \bigcup_{i \in I \setminus F} X_i$, where $U_i$ is an open neighborhood at $x_i$ in $X_i \subset X$ and $F$ is a finite subset of $I$ (see \cite[Section 2]{BarMil} and \cite[p. 18]{EdaKaw}).
\end{definition}

In \cite[Definition 2.1]{MasMir}, small $n$-Hawaiian loop was defined as a small map from $\mathbb{HE}^n$ to any space $X$. All spaces are assumed to be first countable. In this paper, by ${\widehat{\qquad}}^p$ we mean the $p$-adic completion of a group and $P$ denotes the set of all primes.

\section{Hawaiian Homology groups}

In this section, we define the Hawaiian homology as a new invariant from the category of pointed topological spaces to the category of abelian groups.
First, we define the Hawaiian simplex in the natural way. That is, the disjoint union of a countably infinite family  of Euclidean  simplexes whose diameters tend to zero together with the origin as the cluster point.

\begin{definition}
Let $\Delta_k^n = \Delta^n/k = \{a \in \mathbb{R}^{n+1}|\ ka \in \Delta^n\}$, for $k=1,2,\ldots$, where $\Delta^n$ denotes the standard Euclidean $n$-simplex, and let $\Delta_0^n = \{ \theta\}$, where $\theta$ denotes the origin. The $n$-Hawaiian simplex  is defined as follows:
\[H\Delta^n = \bigcup_{k = 0}^{\infty} \Delta^n_k.
\]
Moreover, an $n$-Hawaiian simplex in the given space $(X, x_0)$ is a continuous map $\sigma^n : (H\Delta^n, \theta) \to (X, x_0)$. The restriction of $\sigma^n$ on the $k$th simplex $\Delta_k^n$ can be assumed as an $n$-simplex in $X$, denoted by $\sigma_k^n = \sigma^n|_{\Delta_k^n}$.
\end{definition}

Similar to the definition of singular homology,  the group $S_n(X, x_0)$ is defined as the free abelian group generated by all $n$-Hawaiian simplexes in $(X, x_0)$.  To define the boundary homomorphism $\partial_n: S_n(X, x_0) \to S_{n-1} (X, x_0)$, we need to introduce face maps. Let $k \in \mathbb{N}$. Consider $\varepsilon_{i,k}^n : \Delta_k^{n-1} \to \Delta_k^n$ as the map defined by the $i$th face-map on Euclidean $(n-1)$-simplex together with the appropriate coefficient associated with $k \in \mathbb{N}$.
Then let
$\varepsilon_i^n = \bigcup_{k = 0}^{\infty} \varepsilon_{i, k}^{n} : \bigcup_{k =0}^{\infty} \Delta^{n-1}_k \to \bigcup_{k =0}^{\infty} \Delta^{n}_k$ be the map obtained by the union of all $\varepsilon^n_{i,k}$'s for $k \in \mathbb{N}$ and the constant map for $k = 0$.
The mapping $\varepsilon_i^n$ is continuous, because $\Delta_k^{n-1}$ is mapped into $\Delta_k^n$ by a continuous mapping for each $k \in \mathbb{N}$ and then it maps each convergent sequence to a convergent sequence. Then $\varepsilon_i^n$ is called the $i$th face-map of the $n$-Hawaiian simplex. Now the boundary map $\partial_n: S_n(X, x_0) \to S_{n-1} (x, x_0)$  is defined by $\partial_n (\sigma^n) = \sum_{i =0}^n (-1)^i \sigma \circ \varepsilon_i^n$.
Then we have the following chain complex:
\[
\cdots \longrightarrow S_n(X, x_0) \mathop{\longrightarrow}^{\partial_n}
S_{n-1} (X, x_0)
\mathop{\longrightarrow}^{\partial_{n-1}}
\cdots
\longrightarrow
S_1(X, x_0)
\mathop{\longrightarrow}^{\partial_1}
S_0(X, x_0)
\mathop{\longrightarrow}^{\partial_0}
0.
\]

\begin{remark}
To define Hawaiian homology group, we must verify the equality $\partial_n \partial_{n +1} = 0$. It can be checked by the definition of boundary homomorphism as follows: \begin{align*}
\partial_n \partial_{n +1} (\sigma^{n +1}) & = \partial_n \big(\sum_{j =0}^{n+1} (-1)^j \sigma^{n+1} \circ \varepsilon_j^{n+1} \big) \\
& =
\sum_{i, j} (-1)^{i +j} \sigma^{n+1}  \circ \varepsilon^{n+1}_j \circ \varepsilon_i^n\\
& =
 \sum_{i, j} (-1)^{i +j} \bigcup_{k =0}^{\infty} \big( \sigma^{n+1}_k \circ \varepsilon^{n+1}_{j, k} \circ \varepsilon_{i, k}^n \big)\\
 & =
 \bigcup_{k =0}^{\infty}
 \sum_{i, j} (-1)^{i +j}
 \big( \sigma^{n+1}_k \circ \varepsilon^{n+1}_{j, k} \circ \varepsilon_{i, k}^n \big)\\
 & =
 \bigcup_{k =1}^{\infty}
 \partial_n^s \partial_{n+1}^s (\sigma^{n+1}_k) \cup
 \sum_{i, j} (-1)^{i +j}
 \big( \sigma^{n+1}_0 \circ \varepsilon^{n+1}_{j, 0} \circ \varepsilon_{i, 0}^n \big),
\end{align*}
where $\partial^s$ is defined similar to the boundary homomorphism of singular complex; see \cite[Theorem 4.6]{Rot}. Therefore $\partial_n \partial_{n +1} (\sigma^{n +1})$ equals zero because it is the union of zero homomorphisms in singular homology for $k = 1, 2, \ldots$ \cite[Theorem 4.6]{Rot}.
For $k =0$, since $\sigma^{n+1}_0 \circ \varepsilon^{n+1}_{j, 0} \circ \varepsilon_{i, 0}^n$ equals some constant value, namely $b$, we have the following equalities
\begin{align*}
\sum_{i, j} (-1)^{i +j}
 \big( \sigma^{n+1}_0 \circ \varepsilon^{n+1}_{j, 0} \circ \varepsilon_{i, 0}^n \big) &= (-1)^0 b +  (-1)^1b + (-1)^2 b+ \ldots + (-1)^{(n+1) +n} b\\
 & = b \big(1 + (-1) +1 +(-1) + \ldots + 1 + (-1)\big)\\
 & = b (0) = 0.
\end{align*}
 \end{remark}

Now, we are able to define the Hawaiian homology group as follows.

\begin{definition}\label{de1.3n}
Let $(X, x_0)$ be a pointed topological space. Then the $n$th  Hawaiian homology group of $(X, x_0)$, denoted by $\mathbb{H}_n(X, x_0)$, equals the quotient group $\frac{Z_n(X, x_0)}{B_n(X, x_0)}$, where $Z_n(X, x_0) = \ker \partial_n$ and $B_n(X, x_0) = \mathrm{im} \ \partial_{ n +1}$. By $\cls \alpha = \alpha + B_n(X, x_0)$ we mean the conjugacy class of $\alpha$.
\end{definition}

As a simple example, we compute the first Hawaiian homology group of the unit circle.

\begin{example}\label{ex1.4nn}
Let $\mathbb{S}^1$ denote the circle with radius $1$ in the Euclidean space. Then $\mathbb{H}_1(\mathbb{S}^1, a) = \frac{Z_n(\mathbb{S}^1, a)}{B_n(\mathbb{S}^1, a)}$.
Define $\psi :Z_n (\mathbb{S}^1, a) \to \sum_{\aleph_0} H_1(\mathbb{S}^1)$ by the rule $\psi (\sigma^n) = \{\cls \sigma^n|_{\Delta_k^n}\}_{k \in \mathbb{N}}$.
Since simplexes $\{\sigma^n|_{\Delta_k^n}\}_{k \in \mathbb{N}}$ tend to be small, they are contained in a contractible subset, and then they may be replaced with constant simplexes. Hence just a finite number of non-null-homotopic simplexes remain. Therefore, $\mathbb{H}_1(\mathbb{S}^1, a) \cong \sum_{\aleph_0} H_1(\mathbb{S}^1) \cong \sum_{\aleph_0} \mathbb{Z}$.
\end{example}

By Definition \ref{de1.3n}, for each pointed topological space, we correspond a sequence of abelian  groups. To define a functor $\mathbb{H}_n: Top_* \to Ab$, $n \ge 0$, from the category of pointed topological spaces to the category of abelian groups, we need to correspond morphisms; that is, for each pointed map $f:(X, x) \to (Y, y)$, a homomorphism $\mathbb{H}_n(f):\mathbb{H}_n(X, x) \to \mathbb{H}_n(Y, y)$ exists that satisfies
\begin{itemize}
\item[(i)]
$\mathbb{H}_n(id_X) = id_{\mathbb{H}_n(X, x)}$,
\item[(ii)]
$\mathbb{H}_n( f\circ g) = \mathbb{H}_n(f) \circ \mathbb{H}_n(g)$.
\end{itemize}
Similar to the definition of the functor of singular homology group \cite[p. 66]{Rot}, we define $\mathbb{H}_n(f):\mathbb{H}_n(X, x) \to \mathbb{H}_n(Y, y)$ by  $\cls z_n \mapsto \cls f_{\#} (z_n)$, where $f_{\#}: S_n(X, x) \to S_n(Y, y)$ is defined by $f_{\#} (\sum m_{\sigma}  \sigma) = \sum m_{\sigma} (f \circ \sigma)$. The same argument as used in \cite[Theorem 4.10]{Rot} implies that $\mathbb{H}_n(f)$ satisfies  conditions (i) and (ii), and then $\mathbb{H}_n$ is a functor.
Then it is natural to know if the Hawaiian homology functor preserves pointed homotopy mappings.
There are some differences between the singular homology group, as the most familiar homology theory, and the Hawaiian homology group. The first one is the preservation of homotopy mappings; for the singular homology group, it suffices to be a free homotopy for inducing an isomorphism,   but for the Hawaiian homology group, if the homotopy mapping does not preserve the base point, it does not necessarily induce an isomorphism (see Example \ref{ex1.4n}).

Now it is natural to know if the Hawaiian homology functors satisfy the homology axioms.

\subsection*{Axioms of homology}

In homology theory, there are five properties called axioms of homology or sometimes called Eilenberg-Steenrood axioms, which consists of dimension axiom, additivity, homotopy, exactness and excision.
These axioms classify different theories of homology and also help to calculate the homology group of some spaces such as $n$-spheres.
We see that the Hawaiian homology groups do not satisfy all of these axioms. In
 the following we investigate homology axioms for Hawaiian homology.
\begin{itemize}
\item
Dimension axiom for the Hawaiian homology group, similar to the singular homology group, can be verified directly by calculating boundary homomorphism; see \cite[Theorem 4.12]{Rot}.

\item
A modified version of the additivity axiom holds for Hawaiian homology group as stated in Theorem \ref{th2.8nn}. Moreover, Example \ref{ex2.9nn} shows that classical additivity axiom does not hold for Hawaiian homology.

\item
Homotopy axiom holds if the homotopy mapping preserves the base point (Proposition \ref{pr2.5nn}). Moreover, consider the cone over the $1$-dimensional Hawaiian earring. Its $1$st Hawaiian homology group is not trivial (See Theorem \ref{th2.10nn} and \cite[Theorem 2.13]{BabMas1}), while it is contractible.

\item
The exactness and excision axioms hold for the Hawaiian homology by the definition \ref{de1.3n} and the proof is straightforward.
\end{itemize}


\begin{proposition}\label{pr2.5nn}
Let $f,g:(X, x_0) \to (Y, y_0)$ be pointed maps and let $F: f \simeq g$ rel $\{x_0\}$. Then $\mathbb{H}_n(f) = \mathbb{H}_n (g)$ for $n \ge 0$.
\end{proposition}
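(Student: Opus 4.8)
The plan is to imitate the proof of the homotopy axiom for singular homology: I will show that the chain maps $f_{\#}, g_{\#}: S_n(X, x_0) \to S_n(Y, y_0)$ are chain homotopic by constructing a \emph{Hawaiian prism operator} $P_n: S_n(X, x_0) \to S_{n+1}(Y, y_0)$ satisfying the chain-homotopy identity
\[
\partial_{n+1} P_n + P_{n-1} \partial_n = g_{\#} - f_{\#}.
\]
Granting this, if $z \in Z_n(X, x_0)$ then $\partial_n z = 0$, so $g_{\#}(z) - f_{\#}(z) = \partial_{n+1} P_n(z) \in B_n(Y, y_0)$; hence $\cls f_{\#}(z) = \cls g_{\#}(z)$, which is precisely the assertion $\mathbb{H}_n(f) = \mathbb{H}_n(g)$.

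To build $P_n$, I would work piecewise and then assemble. Given a Hawaiian $n$-simplex $\sigma^n: (H\Delta^n, \theta) \to (X, x_0)$ with pieces $\sigma_k^n = \sigma^n|_{\Delta_k^n}$, form on each prism $\Delta_k^n \times I$ the composite $F \circ (\sigma_k^n \times \mathrm{id})$ and subdivide $\Delta_k^n \times I$ into $(n+1)$-simplices exactly as in the singular case, with the usual alternating signs (cf.\ the prism construction in \cite{Rot}). Taking, for each fixed summand index, the union over $k \ge 1$ of the resulting singular $(n+1)$-simplices on the $\Delta_k^{n+1}$, together with the constant map at $y_0$ on $\Delta_0^{n+1} = \{\theta\}$, yields a candidate Hawaiian $(n+1)$-simplex; then $P_n(\sigma^n)$ is the corresponding signed sum, extended additively to all of $S_n(X, x_0)$.

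The main obstacle is to verify that these assembled unions are \emph{genuine} Hawaiian simplices, that is, continuous on all of $H\Delta^{n+1}$ and sending $\theta$ to $y_0$; the algebraic identity itself holds piecewise on each $\Delta_k^{n+1}$ with $k \ge 1$ by the classical singular prism relation, while on the $k=0$ term everything is constant at $y_0$, where both sides of the identity vanish. For the continuity and base-point conditions, the key point is that $\sigma^n$ is continuous at $\theta$, so the images $\sigma_k^n(\Delta_k^n)$ shrink to $x_0$ as $k \to \infty$: any neighborhood of $\theta$ in $H\Delta^n$ contains all but finitely many pieces $\Delta_k^n$. Since $F$ preserves the base point, $F(\{x_0\} \times I) = \{y_0\}$, and by the tube lemma (using compactness of $I$) every neighborhood of $y_0$ pulls back under $F$ to a neighborhood of $\{x_0\} \times I$. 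Combining these, the image $F\big(\sigma_k^n(\Delta_k^n) \times I\big)$ lies in any prescribed neighborhood of $y_0$ for all large $k$, so the prism pieces converge to $y_0$; this gives continuity at $\theta$ and shows the union carries $\theta$ to $y_0$.

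This last step is exactly where the hypothesis $F: f \simeq g$ rel $\{x_0\}$ is indispensable: a free homotopy would allow the track $F(x_0, t)$ to wander away from $y_0$, destroying the convergence of the prism pieces and hence the very fact that the prism is a Hawaiian simplex. This is the structural reason the homotopy axiom fails for non-pointed homotopies (see Example \ref{ex1.4n}), and confirms that only the base-point-preserving case can be expected to go through.
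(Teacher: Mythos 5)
Your proposal follows essentially the same route as the paper: apply the classical singular prism construction to each piece $\sigma_k^n$ via $F\circ(\sigma_k^n\times\mathrm{id})$, use the fact that $F$ is pointed together with compactness of $I$ (the tube lemma) to see the prism pieces null-converge to $y_0$, and assemble them into a genuine Hawaiian $(n+1)$-chain whose boundary witnesses $\cls f_{\#}(z)=\cls g_{\#}(z)$. If anything, you are slightly more careful than the paper in recording the full chain-homotopy identity $\partial_{n+1}P_n+P_{n-1}\partial_n=g_{\#}-f_{\#}$ and restricting to cycles before discarding the $P_{n-1}\partial_n$ term, but the underlying idea is identical.
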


\begin{proof}
To prove the equality $\mathbb{H}_n(f) = \mathbb{H}_n(g)$, for every $n$-Hawaiian simplex $\alpha$, we show that   $\mathbb{H}_n(f) (\cls  \alpha) = \mathbb{H}_n(g) (\cls \alpha)$. It is equivalent to $f_{\#} (\alpha) - g_{\#} (\alpha)$ being the boundary of an $(n+1)$-Hawaiian simplex. By \cite[Theorem 4.23]{Rot}, since $F:f \simeq g$, $f^s_{\#} (\alpha|_{\Delta^n_k}) - g^s_{\#} (\alpha|_{\Delta^n_k})$ equals the boundary of an $(n+1)$-simplex in $Y$, namely $\beta_k$ is  constructed by $F(\alpha|_{\Delta^n_k}, -)$ for $k = 0, 1, 2, \ldots$.  Since $F$ is a pointed homotopy mapping, and $\mathbb{I}$ is compact,
the images of $\beta_k$'s tend to the point $y_0$ (for a detailed proof see \cite[Theorem 2.13]{BabMas1}). Then one can define the $(n+1)$-Hawaiian simplex $\beta = \cup_{k= 0}^{\infty} \beta_k$, where $\beta_0$ is the constant map at the point $y_0$. Moreover, since $f^s_{\#} (\alpha|_{\Delta^n_k}) - g^s_{\#} (\alpha|_{\Delta^n_k})$ for each $k= 0, 1, \ldots$, $f_{\#} (\alpha) - g_{\#} (\alpha)$ equals the boundary of $\beta$.
\end{proof}

A space $X$ is called semilocally strongly contractible at the point $x_0$ if there exists an open neighborhood $U$ of $x_0$ such that the inclusion map $i :U \to X$ is homotopic to the constant map relative to the point $x_0$. Since the image of any Hawaiian simplex is contained in $U$ except a finite number of standard simplexes and $U$ can be contracted to $x_0$, Hawaiian simplexes have no information more than finite standard simplexes. Example \ref{ex1.4nn} calculates the Hawaiian homology group of the circle $\mathbb{S}^1$, and it is formally generalized as follows.

\begin{corollary}\label{co2.6nn}
Let $X$ be  semilocally strongly contractible at the point $x_0$. Then
\[
\mathbb{H}_n(X, x_0) \cong \sum_{\aleph_0} H_n(X).
\]
\end{corollary}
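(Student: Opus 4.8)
The plan is to build an explicit isomorphism by comparing the Hawaiian complex with the singular complex of $X$ one slot at a time. For each $k\ge 1$, sending a Hawaiian $n$-simplex $\sigma^n$ to its restriction $\sigma^n_k=\sigma^n|_{\Delta^n_k}$ (viewed as a singular $n$-simplex via the scaling $\Delta^n_k\cong\Delta^n$) and extending linearly gives a homomorphism $\rho_k\colon S_n(X,x_0)\to S^s_n(X)$ into the singular chain group. Since the Hawaiian boundary is computed slotwise --- this is precisely the identity $\partial_n(\sigma^n)|_{\Delta^{n-1}_k}=\partial^s_n(\sigma^n_k)$ already used to check $\partial_n\partial_{n+1}=0$ --- each $\rho_k$ is a chain map and induces $(\rho_k)_*\colon\mathbb{H}_n(X,x_0)\to H_n(X)$. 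I would then set $\psi=\big((\rho_k)_*\big)_{k\ge1}$, $\cls z\mapsto(\cls\rho_k(z))_{k}$, and show it is an isomorphism onto the finitely supported sequences, i.e. onto $\sum_{\aleph_0}H_n(X)$.

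First I would check that $\psi$ lands in the direct sum. A cycle $z$ is a finite combination of Hawaiian simplexes, and continuity at $\theta$ forces each of them to tend to $x_0$; hence there is a $K$ such that for every $k>K$ all simplexes occurring in $z$ map $\Delta^n_k$ into the distinguished neighborhood $U$. For such $k$ the singular cycle $\rho_k(z)$ is supported in $U$, so its class factors through $i_*\colon H_n(U)\to H_n(X)$. As $i\colon U\hookrightarrow X$ is homotopic relative to $x_0$ to the constant map, $i_*$ agrees with the map induced by the constant map, which is zero on $H_n$ for $n\ge1$; thus $\cls\rho_k(z)=0$ for $k>K$, and only finitely many coordinates survive. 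Well-definedness on homology classes is automatic, since each $\rho_k$ is a chain map and sends $B_n(X,x_0)$ into singular boundaries.

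Surjectivity I would obtain from a one-sided inverse built from slot-concentrated simplexes. Given a singular simplex $\tau\colon\Delta^n\to X$, let $\widehat\tau^{\,(k)}\colon H\Delta^n\to X$ be the Hawaiian simplex equal to $\tau$ on $\Delta^n_k$ and constant at $x_0$ on every other $\Delta^n_m$; it is continuous because only one slot, bounded away from $\theta$, is non-constant. Extending linearly yields a chain map $\widehat{\ }^{\,(k)}\colon S^s_\bullet(X)\to S_\bullet(X,x_0)$ with $\rho_m\circ\widehat{\ }^{\,(k)}=\mathrm{id}$ when $m=k$, and, when $m\ne k$, the chain map $e$ sending every singular simplex to the constant $n$-simplex at $x_0$, which induces $0$ on $H_n$ for $n\ge1$. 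Hence $\Phi:=\sum_k(\widehat{\ }^{\,(k)})_*$, a finite sum on each element of $\sum_{\aleph_0}H_n(X)$, satisfies $\psi\circ\Phi=\mathrm{id}$, so $\psi$ is onto.

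The main obstacle is injectivity: if $z$ is a Hawaiian cycle with $\cls\rho_k(z)=0$ for all $k$, I must produce a Hawaiian $(n+1)$-chain $w$ with $\partial_{n+1}w=z$. Slotwise this is easy, since each $\rho_k(z)$ bounds some singular chain; the difficulty is that the fillings must assemble into \emph{genuine} Hawaiian simplexes, i.e. they must converge to $x_0$. I would split the slots at the $K$ above. For the finitely many head slots $k\le K$, arbitrary singular fillings can be absorbed into slot-concentrated Hawaiian simplexes as in the previous paragraph. For the tail $k>K$, where $\rho_k(z)$ lies in $U$, I would apply the prism (chain-homotopy) operator $P$ of the contracting homotopy $F\colon U\times\mathbb{I}\to X$ \cite{Rot}; it gives $\partial^s P\rho_k(z)=\rho_k(z)-e\rho_k(z)$, and the constant cycle $e\rho_k(z)$ is killed by a fixed constant $(n+1)$-chain, filling slot $k$. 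The essential point is that these prism fillings shrink to $x_0$: because $F$ is constant equal to $x_0$ on $\{x_0\}\times\mathbb{I}$ and the simplexes of $\rho_k(z)$ converge to $x_0$, the tube lemma produces, for each neighborhood $V$ of $x_0$, a neighborhood $W$ with $F(W\times\mathbb{I})\subseteq V$, whence $P\rho_k(z)$ has image in $V$ for all large $k$ --- exactly the convergence argument already used in Proposition \ref{pr2.5nn}. Thus the tail fillings assemble into continuous Hawaiian $(n+1)$-simplexes which, together with the head fillings, give $w$ with $\partial w=z$, so $\cls z=0$. This shows $\psi$ is an isomorphism, $\mathbb{H}_n(X,x_0)\cong\sum_{\aleph_0}H_n(X)$ for $n\ge1$, the case $n=0$ being treated directly.
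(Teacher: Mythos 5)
Your construction of $\psi$, the check that its image lies in the direct sum, and the surjectivity argument via the slot-concentrated section $\Phi$ are all correct, and they are essentially the paper's own construction read in the opposite direction (the paper embeds $\sum_{\aleph_0}H_n(X)$ by slot-concentrated simplexes and contracts the tail of an arbitrary Hawaiian simplex; your tube-lemma control of the prism operator is the right way to make that tail step precise and is the same device as in Proposition \ref{pr2.5nn}). The genuine gap is in the injectivity step, at the sentence where the head and tail fillings are said to ``assemble'' into a chain $w$ with $\partial w=z$. The group $S_{n+1}(X,x_0)$ is free abelian on Hawaiian simplexes, and a Hawaiian simplex occupying several slots is a single indecomposable generator: it is not the sum of its slot-concentrated pieces, and arranging $\rho_k(\partial w)=\rho_k(z)$ for every $k$ does not yield $\partial w=z$. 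In particular, after contracting the tail you are left with a cycle built from Hawaiian simplexes that may be non-constant in several head slots simultaneously, and your head fillings only control its slotwise shadows.

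This is not a repairable bookkeeping issue for $n\ge 2$. Let $C^{(K)}_\bullet\subseteq S_\bullet(X,x_0)$ be the subcomplex generated by Hawaiian simplexes that are constant at $x_0$ outside slots $1,\dots,K$. Truncating all tails to the constant simplex is a chain retraction of $S_\bullet(X,x_0)$ onto $C^{(K)}_\bullet$, so $H_n(C^{(K)}_\bullet)$ injects into $\mathbb{H}_n(X,x_0)$. But a generator of $C^{(K)}_n$ is exactly a $K$-tuple of singular $n$-simplexes with the diagonal face maps, i.e. $C^{(K)}_\bullet\cong S^s_\bullet(X^K)$, whose homology is $H_n(X^K)$ and carries K\"{u}nneth cross-terms beyond $\bigoplus_{k\le K}H_n(X)$. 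For $X=\mathbb{S}^1$, $n=2$, $K=2$, the class $[\mathbb{S}^1]\times[\mathbb{S}^1]\in H_2(\mathbb{S}^1\times\mathbb{S}^1)\cong\mathbb{Z}$ gives a Hawaiian $2$-cycle supported in slots $1,2$ on which every $\rho_k$ vanishes in homology, yet which is not a Hawaiian boundary (any bounding chain could be retracted into $C^{(2)}_3$, contradicting $H_2(\mathbb{S}^1\times\mathbb{S}^1)\ne 0$). Hence $\psi$ is not injective and no choice of fillings can succeed; the isomorphism you are asked to prove can only hold as stated for $n=1$, where $H_1(X^K)\cong\bigoplus_{k\le K}H_1(X)$, and even there the head step should be justified by this K\"{u}nneth identification (or an Eilenberg--Zilber argument) rather than by filling slot by slot.
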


\begin{proof}
We consider $H_n(X)$ as a subgroup of $\mathbb{H}_n(X, x_0)$, where each  simplex vanishes except the $k$th one corresponded to the elements of $H_n(X)$. The intersections vanish, because each factor is corresponded to a unique $k \in \mathbb{N}$. It remains to verify that $\mathbb{H}_1(X, x_0)$ is  generated by $\sum_{\aleph_0} H_n(X)$.
Let $\sigma$ be an $n$-Hawaiian simplex in $X$ at $x_0$. Since $X$ is semilocally strongly contractible at $x_0$, there exists a neighborhood $U$ of $x_0$ that is contractible in $X$ at $x_0$. Since each $n$-Hawaiian simplex is a union of null-convergent standard $n$-simplexes (see \cite{BabMas1}), there is $K \in \mathbb{N}$ such that $im (\sigma_k) \subseteq U$ for $k \ge K$. Then $\bigcup_{k \ge K}\sigma_k$ is null-homotopic in $X$, and thus $\mathbb{H}_n(X, x_0)$ is generated by $\sum_{\aleph_0} H_n(X)$.
\end{proof}

We present a space in Example \ref{ex1.4n} to prove that a free homotopy equivalence does not necessarily induce the  isomorphism on Hawaiian homology groups. 

\begin{example}\label{ex1.4n}
Let $C(\mathbb{HE}^1)$ be the cone over the one-dimensional Hawaiian earring. Since the cones are contractible, the homotopy axiom implies that the homology group must vanish, but it does not hold for the first Hawaiian homology group. To prove this, consider the simple $1$-Hawaiian simplex $\sigma$ whose image is the lowest Hawaiian earring. This Hawaiian simplex is contained in $Z_1(C\mathbb{HE}^1, \theta) = \ker \partial_1$, and also there is no $2$-Hawaiian simplex whose boundary equals $\sigma$.
\end{example}

\begin{theorem}\label{th2.8nn}
Let $X$ be semilocally path-connected at point $x_0$, let $\{X_{\lambda}\}$ be the family of path components with $x_0 \in X_{\lambda_0}$, and let $n \ge 0$. Then
\begin{equation}\label{eq1.1n}
\mathbb{H}_n (X, x_0) \cong \big(\sum_{\lambda \neq \lambda_0} \sum_{\aleph_0} H_n(X_{\lambda}) \big) \oplus \mathbb{H}_n(X_{\lambda_0} , x_0).
\end{equation}
\end{theorem}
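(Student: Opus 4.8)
The plan is to reduce the computation to ordinary singular homology by exploiting that each Hawaiian simplex is supported in $X_{\lambda_0}$ away from finitely many slots. Since $X$ is semilocally path-connected at $x_0$, fix an open neighborhood $U$ of $x_0$ contained in $X_{\lambda_0}$. For any $n$-Hawaiian simplex $\sigma\colon(H\Delta^n,\theta)\to(X,x_0)$ the restrictions $\sigma_k=\sigma|_{\Delta_k^n}$ tend to $x_0$, so $\sigma_k(\Delta_k^n)\subseteq U\subseteq X_{\lambda_0}$ for all but finitely many $k$; as each $\Delta_k^n$ is connected, every $\sigma_k$ lands in a single path component. Thus $\sigma$ has only finitely many \emph{exceptional} slots, namely those landing in some $X_\lambda$ with $\lambda\neq\lambda_0$, and because the face maps $\varepsilon_i^n$ act slot by slot, the boundary operator respects this component bookkeeping.

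First I would split off $\mathbb{H}_n(X_{\lambda_0},x_0)$. Define $r\colon S_\bullet(X,x_0)\to S_\bullet(X_{\lambda_0},x_0)$ on a generator $\sigma$ by replacing each exceptional slot $\sigma_k$ with the constant $n$-simplex at $x_0$ and leaving the other slots fixed; the result is a Hawaiian simplex into $X_{\lambda_0}$. A short computation shows $r$ is a chain map: on nonexceptional slots $r$ commutes with $\partial_n$ by definition, while on an exceptional slot both $\partial_n r(\sigma)$ and $r\partial_n(\sigma)$ equal $\big(\sum_{i=0}^n(-1)^i\big)$ times the constant $(n-1)$-simplex, since every face of a constant simplex is constant. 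Writing $\iota$ for the inclusion of the subcomplex of Hawaiian simplexes with no exceptional slot, we have $r\circ\iota=\mathrm{id}$, so $\iota$ is a split monomorphism of chain complexes and $S_\bullet(X,x_0)\cong S_\bullet(X_{\lambda_0},x_0)\oplus\ker r$. Taking homology yields $\mathbb{H}_n(X,x_0)\cong\mathbb{H}_n(X_{\lambda_0},x_0)\oplus H_n(\ker r)$, reducing the theorem to the identification $H_n(\ker r)\cong\sum_{\lambda\neq\lambda_0}\sum_{\aleph_0}H_n(X_\lambda)$.

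For the remaining factor I would imitate the slot-wise projection of Example \ref{ex1.4nn}. Given a Hawaiian $n$-cycle $z=\sum m_\sigma\sigma$, each slot component $z_k=\sum m_\sigma\sigma_k$ is a singular $n$-cycle that splits over path components as $z_k=\sum_\lambda z_{k,\lambda}$, since no singular chain meets two components \cite{Rot}. As $z$ is a finite sum of simplexes each with finitely many exceptional slots, the family $(\cls z_{k,\lambda})_{k,\,\lambda\neq\lambda_0}$ has finite support, so $z\mapsto(\cls z_{k,\lambda})$ induces a homomorphism $H_n(\ker r)\to\sum_{\lambda\neq\lambda_0}\sum_{\aleph_0}H_n(X_\lambda)$. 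In the other direction, a singular cycle $c$ in $X_\lambda$ placed into a single slot $k$ (and made constant elsewhere) is a genuine Hawaiian cycle, because its boundary is the placement of $\partial^s c=0$; placement sends boundaries to boundaries as well, so it descends to homology and is a candidate inverse.

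The main obstacle is proving that these two maps are mutually inverse on homology. One composite is immediate, since projecting a placed cycle returns it; the difficulty is the reverse, which amounts to unbundling: a single Hawaiian simplex, whose exceptional slots are carried together with inessential $X_{\lambda_0}$-slots, must be shown homologous---after discarding the $X_{\lambda_0}$-part already extracted by $r$---to the formal sum of its exceptional slots placed one at a time. I expect this to demand an explicit prism-type chain homotopy that interpolates between a Hawaiian simplex and the disjoint placement of its slots, with careful tracking of the constant simplexes introduced by $r$. Here the convergence of the slots to $x_0$ together with the separation of path components is precisely what guarantees the homotopy remains a finitely supported, legitimate Hawaiian chain; granting it, homology commutes with the direct sum and the isomorphism \eqref{eq1.1n} follows.
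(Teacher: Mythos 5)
The first half of your argument is sound and is in fact a cleaner version of what the paper does: the paper also splits $\mathbb{H}_n(X,x_0)$ into the tail lying in $X_{\lambda_0}$ plus contributions of the finitely many exceptional slots, asserting that these subgroups generate and meet trivially. Your retraction $r$, the verification that it is a chain map, and the resulting splitting $\mathbb{H}_n(X,x_0)\cong\mathbb{H}_n(X_{\lambda_0},x_0)\oplus H_n(\ker r)$ are all fine. The genuine gap is exactly the step you flag and postpone: to identify $H_n(\ker r)$ with $\sum_{\lambda\neq\lambda_0}\sum_{\aleph_0}H_n(X_\lambda)$ you must show that a cycle occupying several exceptional slots is homologous to the sum of its single-slot placements, and the prism-type chain homotopy you hope for does not exist. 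The obstruction is a K\"unneth cross-term between slots. Take $X=\{x_0\}\sqcup\mathbb{S}^1$ and $n=2$. Every face of a slot landing in $\mathbb{S}^1$ again lands in $\mathbb{S}^1$, so $S_\bullet(X,x_0)$ splits as a direct sum of subcomplexes indexed by the finite set $T\subseteq\mathbb{N}$ of slots mapping into $\mathbb{S}^1$; the summand for $T=\{1,2\}$ is the chain complex of the product simplicial set $\mathrm{Sing}(\mathbb{S}^1)\times\mathrm{Sing}(\mathbb{S}^1)$, whose second homology is $H_2(\mathbb{S}^1\times\mathbb{S}^1)\cong\mathbb{Z}$ by Eilenberg--Zilber. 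Concretely, the shuffle representative of $[\gamma]\otimes[\gamma]$ (the two singular $2$-simplexes triangulating the square $\gamma\times\gamma$, placed in slots $1$ and $2$ and constant elsewhere) is a Hawaiian $2$-cycle that is not a boundary, yet its slot-wise projection lands in $\sum_{\aleph_0}H_2(\mathbb{S}^1)=0$. So your map $H_n(\ker r)\to\sum_{\lambda\neq\lambda_0}\sum_{\aleph_0}H_n(X_\lambda)$ is not injective, and in this example the left side of \eqref{eq1.1n} is nonzero while the right side vanishes.

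The missing step therefore cannot be supplied: the isomorphism \eqref{eq1.1n} fails as stated for $n\ge 2$, and already for $n=0$ (take $X=\{x_0\}\sqcup\{p\}$, where there are no boundaries at all, so the simplex with two slots at $p$ is not homologous to the sum of the two single-slot placements) the two sides agree only by an accidental count of ranks, not via the natural maps. You should not read this as merely having failed to find the homotopy; you located the one nontrivial point of the proof, and it is the same point the paper passes over when it asserts without argument that the listed subgroups generate $\mathbb{H}_n(X,x_0)$ and intersect trivially. Any correct version of the statement must account for classes supported jointly on several slots, e.g.\ by replacing $\sum_{\lambda\neq\lambda_0}\sum_{\aleph_0}H_n(X_\lambda)$ with a direct sum, over finite multisets of slots and components, of $H_n$ of the corresponding finite products of components.
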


\begin{proof}
First,  note that for each finite sequence of standard simplexes, one may construct a Hawaiian simplex by vanishing all terms of the sequence except the finite number of given simplexes. Therefore, $\sum_{\aleph_0} H_n(X_{\lambda})$ can be considered as a natural subgroup of $\mathbb{H}_n (X, x_0)$.
Moreover, consider two elements $\cls \sigma, \cls \beta  \in \mathbb{H}_n(X_{\lambda_0} , x_0)$. If $\cls \sigma = \cls \beta$, then $\sigma- \beta$ equals the boundary   of some $(n+1)$-Hawaiian simplexes in $X_{\lambda_0}$. Thus it holds for $\sigma- \beta$ as an $n$-Hawaiian simplex in $X$. Then $\cls \sigma = \cls \beta$ in $\mathbb{H}_n(X, x_0)$. Hence, $\mathbb{H}_n(X_{\lambda_0} , x_0)$  is a subgroup of $\mathcal{H}_n(X, x_0)$. Also, since $X$ is semilocally path-connected, there exists a path-connected open neighborhood, namely $U$ of $X$ at $x_0$.
We show that the abelian group $\mathbb{H}_n (X, x_0)$ is generated by the family of subgroups  $\{\sum_{\aleph_0} H_n(X_{\lambda})\}_{\lambda \neq \lambda_0} \cup \{  \mathbb{H}_n(X_{\lambda_0} , x_0)\}$. To prove this, consider a Hawaiian simplex $\sigma$. Then there is a natural number $K$ such that if $k \ge K$, then $im (\sigma (\Delta_k^n)) \subseteq U$. Note that since $U$ is path-connected, $U \subseteq X_{\lambda_0}$, and then $im(\sigma|_{\bigcup_{k \ge K} \Delta_k^n}) \subseteq X_{\lambda_0}$. Therefore,  $\sigma|_{\bigcup_{k \ge K} \Delta_k^n}$ can be considered as an element of $\mathbb{H}_n(X_{\lambda_0}, x_0)$ and the other standard simplexes $\Delta_k^n$, $k <K$, are generated by   $\sum_{\lambda \neq \lambda_0} \sum_{\aleph_0} H_n(X_{\lambda})$, as desired.
Since these subgroups have  trivial intersections, because of the path-connectedness of simplexes, the isomorphism holds.
\end{proof}


Now, we present an example to show that the isomorphism \eqref{eq1.1n} differs from the similar statement for the singular homology,
\[
H_n(X) \cong \sum_{\lambda} H_n(X_{\lambda}).
\]

\begin{example}\label{ex2.9nn}
Consider the space $X = \mathbb{HE}^2 \dot{\cup} \mathbb{S}^2$ and the point $a \in \mathbb{S}^2$. Then $\mathbb{H}_2( X, a) \cong \sum_{\aleph_0} H_2(\mathbb{HE}^2) \oplus \mathbb{H}_2( \mathbb{S}^2, a) = \sum_{\aleph_0} \prod_{\aleph_0} \mathbb{Z} \oplus \sum_{\aleph_0} \mathbb{Z}$ \cite{EdaKaw}. On the other hand, $\mathbb{H}_2(\mathbb{HE}^2, \theta) \oplus \mathbb{H}_2 (\mathbb{S}^2, a) \cong \prod_{\aleph_0} \sum_{\aleph_0} \mathbb{Z} \oplus \sum_{\aleph_0} \mathbb{Z}$ and these two groups are not isomorphic \cite{Fuc}.
\end{example}

By a well-known fact called Hurewicz theorem, for path-connected spaces, two functors of homotopy and homology are  connected. In fact, for a path-connected space $X$, there exists a surjective homomorphism $\phi_s: \pi_1(X, x_0) \to H_1(X)$ whose kernel equals the commutator subgroup of $\pi_1(X, x_0)$.
A similar relation exists between Hawaiian homology and Hawaiian groups of pointed spaces. For each pointed space $(X, x_0)$, there exists a homomorphism $\phi: \mathcal{H}_1(X, x_0) \to \mathbb{H}_1(X, x_0)$. The homomorphism $\phi$ is surjective and its kernel equals the commutator subgroup if $X$ is path-connected and locally path-connected at $x_0$.

\begin{theorem}\label{th2.10nn}
Let $X$ be a path-connected space that is locally path-connected at $x_0$. Then the homomorphism $\phi : \mathcal{H}_1(X, x_0) \to \mathbb{H}_1(X, x_0)$ is surjective whose kernel equals the commutator subgroup of $\mathcal{H}_1 (X, x_0)$.
\end{theorem}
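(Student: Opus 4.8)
The plan is to treat this as a Hawaiian analogue of the classical one-dimensional Hurewicz theorem, performing the usual $\pi_1$-to-$H_1$ argument simultaneously on every circle $\mathbb{S}^1_k$ of $\mathbb{HE}^1$ while keeping all the auxiliary data shrinking to $x_0$. Recall that $\phi$ sends $[f]\in\mathcal{H}_1(X,x_0)$ to $\cls\sigma_f$, where $\sigma_f$ is the $1$-Hawaiian simplex whose $k$th constituent $\sigma_{f,k}$ is the loop $f|_{\mathbb{S}^1_k}$ read as a singular $1$-simplex. Since each constituent is a loop at $x_0$, its two faces coincide with the constant $0$-Hawaiian simplex at $x_0$, so $\partial_1\sigma_f=0$ and $\sigma_f\in Z_1(X,x_0)$. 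First I would record that $\phi$ is a well-defined homomorphism. Well-definedness is essentially the content of Proposition \ref{pr2.5nn}: a based homotopy $f\simeq g$ rel $\{x_0\}$ yields, circlewise, singular prisms whose images shrink to $x_0$ by compactness of $\mathbb{I}$, and these assemble into a $2$-Hawaiian chain with boundary $\sigma_f-\sigma_g$. For the homomorphism property, on each circle the concatenation $f_k\cdot g_k$ is homologous to $f_k+g_k$ through a single affine $2$-simplex supported on $\mathrm{im}\,f_k\cup\mathrm{im}\,g_k$; as these shrink to $x_0$, they bundle into a $2$-Hawaiian chain certifying $\phi([f][g])=\phi([f])+\phi([g])$. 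Because $\mathbb{H}_1(X,x_0)$ is abelian, the commutator subgroup of $\mathcal{H}_1(X,x_0)$ is automatically contained in $\ker\phi$, so the substance of the theorem is the two reverse assertions.

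For surjectivity I would start from a class $\cls\tau$ with $\tau=\sum_j m_j\tau^{(j)}\in Z_1(X,x_0)$ a finite sum of $1$-Hawaiian simplices. The first observation is that the cycle condition descends to every level: level-$k$ evaluation is a homomorphism $S_0(X,x_0)\to S_0^s(X)$, and since $\partial_1\tau=0$ its level-$k$ image $\partial^s\big(\sum_j m_j\tau^{(j)}_k\big)$ vanishes, so $z_k:=\sum_j m_j\tau^{(j)}_k$ is a singular $1$-cycle for each $k$. Here local path-connectedness at $x_0$ enters crucially: fixing a neighborhood basis at $x_0$ and choosing, for points near $x_0$, connecting paths $\lambda_p$ from $x_0$ that stay in a correspondingly small neighborhood, the classical Hurewicz construction \cite{Rot} rewrites each $z_k$ as a single loop $\ell_k$ at $x_0$ together with a singular $2$-chain $w_k$ satisfying $\partial^s w_k=z_k-\ell_k$. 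Because the $\tau^{(j)}_k$ are small for large $k$ and the $\lambda_p$ are short, both $\ell_k$ and $w_k$ shrink to $x_0$; and since the number of $2$-simplices in $w_k$ is bounded uniformly in $k$ (it depends only on $\sum_j|m_j|$), the $w_k$ assemble into a finite $\ZZ$-combination of $2$-Hawaiian simplices $W$ with $\partial_2 W=\tau-\sigma_f$, where $f\colon(\mathbb{HE}^1,\theta)\to(X,x_0)$ is the Hawaiian loop with $f|_{\mathbb{S}^1_k}=\ell_k$. Thus $\phi([f])=\cls\tau$.

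For the kernel I would take $[f]\in\ker\phi$, so $\sigma_f=\partial_2 W$ for some $2$-Hawaiian chain $W=\sum_l c_l W_l$. Reading this at level $k$ shows the loop $f_k=f|_{\mathbb{S}^1_k}$ is null-homologous in $X$, whence by the classical Hurewicz theorem \cite{Rot} the class $[f_k]$ lies in the commutator subgroup of $\pi_1(X,x_0)$ and is a finite product of commutators. The remaining task, which I expect to be the main obstacle, is to make this uniform and convergent, so that the commutators can be bundled into elements of $\mathcal{H}_1(X,x_0)$. Two estimates are needed, both flowing from the Hawaiian structure of $W$: (a) the number of $2$-simplices of $W$ at each level is bounded independently of $k$ (by $\sum_l|c_l|$), so tracing the Hurewicz kernel argument through these $2$-cells produces a uniform bound $R$ on the number of commutators; (b) for large $k$ the cells $W_{l,k}$ are small, and invoking local path-connectedness once more to keep the connecting paths short, the loops appearing in the commutators lie in arbitrarily small neighborhoods of $x_0$. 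Consequently one may write $f_k=\prod_{i=1}^R[a_{i,k},b_{i,k}]$ with $a_{i,k},b_{i,k}\to x_0$, define Hawaiian loops $A_i=\bigcup_k a_{i,k}$ and $B_i=\bigcup_k b_{i,k}$, and conclude $[f]=\prod_{i=1}^R[[A_i],[B_i]]$ in $\mathcal{H}_1(X,x_0)$, since the product in $\mathcal{H}_1$ is computed circlewise. Hence $\ker\phi$ is contained in the commutator subgroup.

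I expect the genuine difficulty to be precisely the uniformization in the kernel direction: converting the level-wise, a priori $k$-dependent commutator expressions into finitely many bona fide Hawaiian loops. This is exactly where both hypotheses are used essentially: the finiteness of the $2$-Hawaiian chain $W$ yields a uniform $2$-simplex count, hence the uniform commutator count $R$, while local path-connectedness at $x_0$ forces the auxiliary paths, and therefore the commutator loops, to converge to $x_0$; first countability underlies the convergence packaging throughout. By contrast, well-definedness, the homomorphism property, and surjectivity are faithful level-wise transcriptions of the singular arguments, with convergence controlled exactly as in Proposition \ref{pr2.5nn} and \cite{BabMas1}.
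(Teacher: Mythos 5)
Your proposal follows essentially the same route as the paper: the same definition of $\phi$ via circlewise singular $1$-simplices, and the same level-wise transcription of the classical Hurewicz arguments for well-definedness, the homomorphism property, and surjectivity, with null-convergence controlled by local path-connectedness and a nested neighborhood basis exactly as in the paper. In fact your treatment of the kernel (extracting a uniform commutator count from the finite $2$-Hawaiian chain and bundling the level-wise commutators into Hawaiian loops) is more detailed than the paper's, which at that point merely refers the reader back to the surjectivity argument and \cite[Lemma 2.2]{BabMas1}.
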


\begin{proof}
First, we correspond an element of $\mathbb{H}_1(X, x_0)$ for each map $f: (\mathbb{HE}^1, \theta) \to (X, x_0)$. Let $\eta: \bigcup_{k =0}^{\infty} \Delta_k^1 \to \mathbb{HE}^1$ maps each $\Delta_k^1$ onto $\mathbb{S}_k^1$ homeomorphically except at the vertices mapped to the base point $\theta$. Then $f \eta$ is a $1$-Hawaiian simplex in $X$ at $x_0$. Also $\partial (f\eta) = \bigcup_{k =0}^{\infty} f_k \eta_k (e_1^k) - \bigcup_{k =0}^{\infty} f_k \eta_k (e_0^k) = \bigcup_{k =0}^{\infty} \big( f_k \eta_k (e_1^k) - f_k \eta_k (e_0^k) \big) = \bigcup_{k =0}^{\infty} = 0$, where $\eta_k :\Delta_k \to \mathbb{S}_k^1$ is the restriction of $\eta$ to the $k$th standard $1$-simplex. Thus $f \eta \in im(\partial_1)$. Hence one can define $\phi:\mathcal{H}_1 (X, x_0) \to \mathbb{H}_1(X , x_0)$ by  $\phi ([f]) = cls f \eta$.

 To prove well-definedness,   let $f \simeq g\ rel \{\theta\}$. Then $f_k \simeq g_k \ rel \{\theta\}$ for each $k =0, 1, \ldots$, where $f_k$ and $g_k$ are the restrictions of $f$ and $g$, respectively, to the $k$th circle $\mathbb{S}_k^1$. By the standard Hurewicz theorem, $cls f_k \eta_k = cls g_k \eta_k$ for $k = 0, 1, \ldots$. Thus $cls (f_k \eta_k - g_k \eta_k) =0$, and then $f_k \eta_k - g_k \eta_k$ belongs to the image of $\partial$, the  boundary of some linear composition of  standard $2$-simplex $\sum_{j=1}^{n} m_j\sigma^j_k$. By the proof of the standard Hurewicz theorem \cite[Lemma 4.26]{Rot}, this standard $2$-simplex is constructed by the homotopy mapping $H_k: f_ k \simeq g_k$ for $k =1 ,2 , \ldots$. Since $H_k$ is the restriction of the homotopy $H:f \simeq g$ to the $k$th circle, the image of $H_k$ is null-convergent, that is, $im (H_k) \subseteq  U$ if $k \ge K$ for some $K \in \mathbb{N}$ corresponded to the given open set $U$ containing $x_0$ (see \cite[Definition 2.1]{BabMas1}). Therefore, $im (\cup_{j =0}^n\sigma^j_k)$ is null-convergent and then $\cup_{k =0}^{\infty} \sum_{j =1}^nm_n  \sigma^j_k$, where the $0$-simplex is the constant standard simplex, is a linear composition of $2$-Hawaiian simplexes, whose boundary equals $f \eta - g \eta$. Hence $cls\ f-g =0$, and then $cls\ f = cls\ g$. It implies that $\phi$ is well-defined.

  Moreover, $\phi$ is a homomorphism. To prove this, consider two elements $[f]$ and $[g]$ in the group $\mathcal{H}_1(X, x_0)$. Then the  following sequence of equalities holds:
\begin{align*}
\phi ([f] [g]) = \phi ([f*g])  = cls \ (f*g) \eta = cls\ \cup_{k =0}^{\infty} (f_k * g_k) \eta_k.
\end{align*}
We must verify the equality
\[cls \ \cup_{k =0}^{\infty} (f_k * g_k) \eta_k = cls \ \cup_{k =0}^{\infty} f_k \eta_k + cls \ \cup_{k =0}^{\infty} g_k \eta_k.
\]
Equivalently, we must find a composition of  $1$-Hawaiian simplexes whose boundary equals $\cup_{k =0}^{\infty} (f_k * g_k) \eta_k - \cup_{k =0}^{\infty} f_k \eta_k -  \cup_{k =0}^{\infty} g_k \eta_k$. By the proof of the standard Hurewicz theorem \cite[Theorem 4.27]{Rot}, there exists a sequence of composition of standard $2$-simplexes, namely $\sum_{j =1}^n m_n \sigma^j_k$ ($k \in \mathbb{N}$) constructed by the maps $f_k*g_k$, $f_k$, and $g_k$ such that $\partial (\sum_{j =1}^n m_n \sigma^j_k) = (f_k *g_k)\eta_k - f_k \eta_k -  g_k \eta_k$. It remains to verify that the sequence $\{\sum_{j =1}^n m_n \sigma^j_k\}$ is null-convergent. Since the maps $f_k *g_k$, $f_k$, and $g_k$ are null-convergent, so is $\sum_{j =1}^n m_n \sigma^j_k$ ($k \in \mathbb{N}$). Therefore, $\cup_{k = 0}^{\infty} \sum_{j =1}^n m_n \sigma^j_k$ is a $2$-Hawaiian simplex, and also its boundary equals $\cup_{k =0}^{\infty} (f_k * g_k) \eta_k -  \cup_{k =0}^{\infty} f_k \eta_k -  \cup_{k =0}^{\infty} g_k \eta_k = (f*g) \eta - f \eta - g \eta$. Thus $cls \ (f*g) \eta = \cls\ f \eta + cls \  g \eta$, and then $\phi$ is a homomorphism.

To prove surjectivity of the homomorphism $\phi$, we consider an element $cls\ \sigma$ of $\mathbb{H}_1(x, x_0)$. By the  restriction to the $k$th $1$-simplex, denoted by $\sigma_k$, one can use the standard Hurewicz theorem \cite[Theorem 4.29]{Rot}, to obtain a sequence of maps $f_k$ defined from $\mathbb{S}^1$ to $X$ such that $\phi_s([f_k]) =cls\ \sigma_k$. If $\sigma_k = \sum_{i =0}^m \sigma_k^i$, then by the proof of \cite[Theorem 4.29]{Rot}, $f_k$ is defined by the concatenation of a  sequence of simplexes and some paths between the base point $x_0$ and the end points of simplexes.
Since $X$ is locally path-connected at $x_0$ having a countable local basis, one can consider a  sequence of nested open neighborhoods $\{U_i\}_{i \in \mathbb{N}}$ of $x_0$ such that each element $U_i$ is path-connected  in $U_{i+1}$. Therefore, the sequence $f_k$ can be considered to be null-convergent.
 Then define $f|_{\mathbb{S}_k^1} = f_k$. Since $\{f_k\}$ is null-convergent, $f$ is continuous by \cite[Lemma 2.2]{BabMas1}, and also $\phi([f]) = \sigma$.
To calculate the kernel of $\phi$, again use the proof of \cite[Theorem 4.29]{Rot}, \cite[Lemma 2.2]{BabMas1}, and locally path-connectedness at $x_0$.
%
\end{proof}

By Theorem \ref{th2.10nn} and the first isomorphism theorem, it holds that
\[
\mathbb{H}_1(X, x_0) \cong \frac{\mathcal{H}_1(X, x_0)}{\big(\mathcal{H}_1(x, x_0)\big)'},
\]
where $\big(\mathcal{H}_1(x, x_0)\big)'$ denotes the commutator subgroup of the given group $\big(\mathcal{H}_1(x, x_0)\big)$.
Then the first Hawaiian homology group  is isomorphic to the abelianization of the first Hawaiian group for path-connected and locally path-connected pointed topological spaces. This fact helps us to study Hawaiian groups of topological spaces by Hawaiian homology.

\section{Higmann-complete Hawaiian groups}

Herfort and Hojka proved that the fundamental group of archipelago spaces is Higman-complete, introduced as a non-abelian form of cotorsion groups; for more details, see   \cite[Definition 1]{HerHoj}. In the rest of the paper, all the spaces are assumed to be path-connected.

\begin{definition}[\cite{HerHoj}]
A group $G$ is Higman-complete if for any sequence
$f_1, f_2, \ldots \in G$ and for a given sequence of words $w_1, w_2, \ldots$, there exists a
sequence $h_1, h_2, \ldots \in G$ such that all equations
$h_i = w_i(f_i, h_{i+1})$ hold simultaneously.
\end{definition}

The following lemma states that the first Hawaiian  group of any space whose loops are small, belongs to the class of Higman-complete groups. This fact also holds for the fundamental group of archipelago spaces \cite{HerHoj}.

\begin{lemma}\label{le0.1}
If all $1$-loops at the point $x \in X$ are small, then $\mathcal{H}_1(X, x)$ is Higmann-complete.  If it is true for $n$-loops, $n \ge 2$, then $\mathcal{H}_n(X, x)$ is cotorsion.
\end{lemma}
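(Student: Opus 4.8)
The plan is to produce an explicit solution of the Higman system by realizing the formally infinite nested word the data prescribe, using smallness of loops only to guarantee that this infinite concatenation is a genuine (continuous) $n$-Hawaiian loop. Fix elements $[f_1],[f_2],\ldots\in\mathcal{H}_n(X,x)$ and words $w_1,w_2,\ldots$. Unwinding the relations $h_i=w_i(f_i,h_{i+1})$ formally yields
\[
h_i=w_i\bigl(f_i,w_{i+1}(f_{i+1},w_{i+2}(f_{i+2},\ldots))\bigr),
\]
so the whole content of the statement is to interpret this infinite nesting as an element of $\mathcal{H}_n(X,x)$.

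First I would use that, by the very definition of the Hawaiian group, the operation of $\mathcal{H}_n$ is performed coordinatewise on the spheres $\mathbb{S}_k^n$. Choosing representatives $F_i:(\mathbb{HE}^n,\theta)\to(X,x)$ of $[f_i]$, it then suffices to prescribe, for each fixed $k$, the $k$th coordinate $(h_i)_k$ of the desired Hawaiian loop as the infinite concatenation $w_i\bigl((F_i)_k,w_{i+1}((F_{i+1})_k,\ldots)\bigr)$ of the $n$-loops $(F_j)_k=F_j|_{\mathbb{S}_k^n}$, $j\ge i$. Provided each coordinate is a well-defined $n$-loop and the sequence $\{(h_i)_k\}_{k}$ is null-convergent, the resulting maps $h_i$ solve the system on the nose, since concatenation in $\mathcal{H}_n$ is exactly coordinatewise concatenation.

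The heart of the argument --- and the step I expect to be the main obstacle --- is the simultaneous convergence bookkeeping in the two indices $j$ (nesting depth) and $k$ (sphere index). Fix a nested countable neighborhood basis $U_1\supseteq U_2\supseteq\cdots$ at $x$. Since every $n$-loop at $x$ is small, I would replace each $(F_j)_k$, up to pointed homotopy, by an $n$-loop with image in $U_{m(j,k)}$, where $m(j,k)$ is nondecreasing in each variable and $m(j,k)\to\infty$ as $j+k\to\infty$. Two facts must then be verified. First, for fixed $i$ and $k$ the tail of the nested concatenation over $j\ge i$ lies in $U_{m(j,k)}\to x$, so the concatenation extends continuously across the accumulation point of its defining intervals and yields an honest $n$-loop $(h_i)_k$. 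Second, for fixed $i$ the entire loop $(h_i)_k$ lies in $\bigcup_{j\ge i}U_{m(j,k)}\subseteq U_{m(i,k)}$ by nestedness, which tends to $x$ as $k\to\infty$; hence $\{(h_i)_k\}_k$ is null-convergent and $h_i=\bigcup_k (h_i)_k$ is a genuine $n$-Hawaiian loop. The delicate point, which I regard as the technical core, is that replacing $(F_j)_k$ by a small homotopic representative must leave the class $[f_j]\in\mathcal{H}_n(X,x)$ unchanged; this forces the connecting homotopies to be themselves null-convergent in $k$, so it is smallness in the strong sense of \cite[Definition 2.1]{MasMir}, not mere continuity, that is doing the work here.

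Finally, for $n=1$ this construction gives Higman-completeness of $\mathcal{H}_1(X,x)$ directly. For $n\ge2$ the group $\mathcal{H}_n(X,x)$ is abelian, since its multiplication is the coordinatewise concatenation of $n$-loops and $\pi_n(X,x)$ is abelian; and, as Herfort and Hojka introduce Higman-completeness precisely as the non-abelian form of cotorsion \cite{HerHoj}, for abelian groups the two notions coincide. Invoking this equivalence yields that $\mathcal{H}_n(X,x)$ is cotorsion, which completes the proof.
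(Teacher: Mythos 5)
Your proposal is correct and follows essentially the same route as the paper: both arguments reduce to Herfort--Hojka's construction for $\pi_1$ (realizing the infinite nested word $w_i(f_i,w_{i+1}(f_{i+1},\ldots))$ as a genuine loop via smallness), lifted to Hawaiian loops by working coordinatewise on the spheres $\mathbb{S}_k^n$ with null-convergence bookkeeping, and both finish the $n\ge 2$ case by combining abelianness of $\mathcal{H}_n(X,x)$ with the equivalence of Higman-completeness and cotorsion for abelian groups from \cite[Theorem 3]{HerHoj}. If anything, you make explicit the double-indexed convergence control and the need for null-convergent connecting homotopies, which the paper compresses into the preliminary claim that every Hawaiian loop is small followed by an appeal to ``a similar argument'' to \cite[Theorem 4]{HerHoj}.
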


\begin{proof}
Let all $1$-loops be small at $x \in X$. Assume that $f: \mathbb{HE}^1 \to X$ is a $1$-Hawaiian loop at $x$. First, we show that $f$ is a small $1$-Hawaiian loop. That is for each open neighborhood $U$ of $x$, there is a homotopic representative $g \simeq f$ with image contained in $U$. Since $f$ is continuous, there is $K \in \mathbb{N}$ such that $im(f|_{\mathbb{S}_k^1}) \subseteq U$ for $k \ge K$. Moreover, by the assumption, all $f|_{\mathbb{S}_k^1}$'s are small. For $k <K$, define $g_k$ as the homotopic representative $1$-loop of $f|_{\mathbb{S}_k^1}$ with image in $U$. Now put $g:\mathbb{HE}^1 \to X$ as $g|_{\mathbb{S}_k^1} = g_k$ for $k <K$, and $g|_{\mathbb{S}_k^1} = f|_{\mathbb{S}_k^1}$ for $k \ge K$, which is homotopic to $f$ by \cite[Lemma 2.2]{BabMas1}. In \cite[Theorem 4]{HerHoj}, it was shown that $\pi_1(X, x)$ is Higmann-complete, where all $1$-loops at $x$ are small.  Now by a similar argument, one can prove that $\mathcal{H}_1(X, x)$ is Higmann-complete.  Similarly, if all $n$-loops are small, $n \ge 2$, then $\pi_n(X, x)$ and $\mathcal{H}_n(X, x)$ are Higmann-complete. Since $\pi_n(X, x)$ and $\mathcal{H}_n(X, x)$ are abelian, by \cite[Theorem 3]{HerHoj},  they are cotorsion.
\end{proof}

Theorem \ref{th2.10nn} presents a version of Hurewicz theorem for Hawaiian groups and Hawaiian homology groups. Now we use that theorem to obtain the following corollary.

\begin{corollary}
Let $X$ be locally path-connected at $x$, and let all $1$-loops at $x$ be small. Then the first Hawaiian homology group $\mathbb{H}_1(X, x)$ is cotorsion.
\end{corollary}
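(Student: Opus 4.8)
The plan is to combine the Hurewicz-type identification of Theorem \ref{th2.10nn} with the Higman-completeness supplied by Lemma \ref{le0.1}, and then to push Higman-completeness through the abelianization quotient. First I would check that the hypotheses of Theorem \ref{th2.10nn} are in force: $X$ is path-connected (the standing assumption of this section) and locally path-connected at $x$, so that theorem together with the first isomorphism theorem yields
\[
\mathbb{H}_1(X, x) \cong \frac{\mathcal{H}_1(X, x)}{\big(\mathcal{H}_1(X, x)\big)'}.
\]
In other words, $\mathbb{H}_1(X,x)$ is precisely the abelianization of $\mathcal{H}_1(X,x)$. On the other hand, since every $1$-loop at $x$ is small, Lemma \ref{le0.1} tells us that $\mathcal{H}_1(X,x)$ is Higmann-complete.

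The key step is to verify that Higmann-completeness is inherited by this abelianization. I would argue directly from the definition. Let $\pi : \mathcal{H}_1(X,x) \to \mathbb{H}_1(X,x)$ denote the quotient homomorphism. Given any sequence $\bar f_1, \bar f_2, \ldots$ in $\mathbb{H}_1(X,x)$ and any sequence of words $w_1, w_2, \ldots$, choose lifts $f_i \in \mathcal{H}_1(X,x)$ with $\pi(f_i) = \bar f_i$. Higmann-completeness of $\mathcal{H}_1(X,x)$ then produces $h_1, h_2, \ldots$ satisfying $h_i = w_i(f_i, h_{i+1})$ simultaneously, and applying $\pi$ gives $\pi(h_i) = w_i(\bar f_i, \pi(h_{i+1}))$ for all $i$, since word maps commute with homomorphisms. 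Thus the elements $\pi(h_i)$ solve the system in $\mathbb{H}_1(X,x)$, and so $\mathbb{H}_1(X,x)$ is Higmann-complete.

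Finally, $\mathbb{H}_1(X,x)$ is an abelian group, so I would invoke \cite[Theorem 3]{HerHoj} — exactly as in the closing line of the proof of Lemma \ref{le0.1} — to the effect that an abelian Higmann-complete group is cotorsion. This yields that $\mathbb{H}_1(X,x)$ is cotorsion and finishes the argument. I expect the only point genuinely requiring care to be the descent of Higmann-completeness along $\pi$; once one observes that lifting the data, solving upstairs in $\mathcal{H}_1(X,x)$, and projecting back respects the word equations (which is immediate because $\pi$ is a homomorphism), the remainder is bookkeeping built on results already established in the excerpt.
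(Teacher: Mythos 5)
Your proposal is correct and follows essentially the same route as the paper: Lemma \ref{le0.1} gives Higmann-completeness of $\mathcal{H}_1(X,x)$, Theorem \ref{th2.10nn} exhibits $\mathbb{H}_1(X,x)$ as its epimorphic (abelianized) image, and \cite[Theorem 3]{HerHoj} converts Higmann-completeness of an abelian group into cotorsion. The only difference is that you prove the descent of Higmann-completeness along the quotient map directly from the definition, whereas the paper simply cites \cite[Lemma 2]{HerHoj} for that step; your argument is the standard proof of that lemma.
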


\begin{proof}
Since all $1$-loops at $x$ are small, by Theorem \ref{le0.1}, $\mathcal{H}_1(X, x)$ is Higmann-complete. Also by Theorem \ref{th2.10nn}, the group $\mathbb{H}_1(X, x)$ is the epimorphic image of the group $\mathcal{H}_1(X, x)$.  Then it is Higmann-complete by \cite[Lemma 2]{HerHoj}. Moreover, every abelian group is Higmann-complete if and only if it is cotorsion \cite[Theorem 3]{HerHoj}. Therefore $\mathbb{H}_1(X, x)$ is cotorsion.
\end{proof}

If all $n$-loops at a point are small, then both $n$th homotopy and $n$th Hawaiian groups are Higman-complete. Note that the homotopy groups on path-connected spaces do not depend on the choice of the base point. It implies that $\pi_n(X, x)$ is Higmann-complete if all $n$-loops at some points of $X$ are small, but it is not true for Hawaiian groups. Some counterexamples are investigated in Corollary \ref{co3.11nn}, by applying the following lemma.

\begin{lemma}\label{le0.2}
Let $X$ be semi-locally strongly contractible at $x_0$ and let $n \ge 2$. If $H_1(X, x_0)$ is not torsion, then neither $\mathbb{H}_1(X, x_0)$ is  cotorsion, nor  $\mathcal{H}_1(X, x_0)$ is Higman-complete. Moreover, if $\pi_n(X, x_0)$ is not torsion, then $\mathcal{H}_n(X, x_0)$ is not cotorsion.
\end{lemma}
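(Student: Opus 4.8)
The plan is to reduce both assertions to two elementary facts about cotorsion abelian groups: that $\mathbb{Z}$ is not cotorsion (because $\mathrm{Ext}(\mathbb{Q}, \mathbb{Z}) \neq 0$), and that the class of cotorsion groups is closed under passing to direct summands and to epimorphic images. Combined with the dictionary of Herfort and Hojka already used in the preceding corollary — an abelian group is Higman-complete if and only if it is cotorsion, and epimorphic images of Higman-complete groups are Higman-complete — the whole statement becomes a matter of exhibiting a copy of $\mathbb{Z}$ as a summand or a quotient of the relevant Hawaiian group. Thus the real content is to locate a non-cotorsion summand inside each of $\mathbb{H}_1(X, x_0)$, $\mathcal{H}_1(X, x_0)$, and $\mathcal{H}_n(X, x_0)$.

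For the first assertion I would start from Corollary \ref{co2.6nn}: since $X$ is semilocally strongly contractible at $x_0$, one has $\mathbb{H}_1(X, x_0) \cong \sum_{\aleph_0} H_1(X)$. As $H_1(X)$ is not torsion it carries an element of infinite order, and I would split off from it a subgroup isomorphic to $\mathbb{Z}$ as a direct summand; this $\mathbb{Z}$ is then a direct summand of the whole sum, so $\mathbb{H}_1(X, x_0)$ has a non-cotorsion summand and cannot be cotorsion. For the clause on $\mathcal{H}_1(X, x_0)$ I would avoid Theorem \ref{th2.10nn} (which needs local path-connectedness) and argue directly: restriction to the first circle $\mathbb{S}_1^1$ gives a surjective homomorphism $\mathcal{H}_1(X, x_0) \twoheadrightarrow \pi_1(X, x_0)$ (surjective because any loop can be placed on $\mathbb{S}_1^1$ and made constant elsewhere), and composing with the Hurewicz abelianization $\pi_1(X, x_0) \twoheadrightarrow H_1(X)$ exhibits $H_1(X)$ as an epimorphic image of $\mathcal{H}_1(X, x_0)$. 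Were $\mathcal{H}_1(X, x_0)$ Higman-complete, its image $H_1(X)$ would be Higman-complete, hence cotorsion, contradicting the previous paragraph.

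For the second assertion, where $n \ge 2$ and all groups in sight are abelian, I would show that $\pi_n(X, x_0)$ is a direct summand of $\mathcal{H}_n(X, x_0)$. The section $s \colon \pi_n(X, x_0) \to \mathcal{H}_n(X, x_0)$ sends an $n$-loop to the Hawaiian $n$-loop supported on $\mathbb{S}_1^n$ and constant on every other sphere, while the retraction $r \colon \mathcal{H}_n(X, x_0) \to \pi_n(X, x_0)$ restricts to $\mathbb{S}_1^n$; both respect the coordinatewise multiplication $(f*g)|_{\mathbb{S}_k^n} = f|_{\mathbb{S}_k^n} \cdot g|_{\mathbb{S}_k^n}$, and $r \circ s = \mathrm{id}$, so $\pi_n(X, x_0)$ splits off as a summand. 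Since $\pi_n(X, x_0)$ is not torsion, the same splitting of a $\mathbb{Z}$ summand makes it non-cotorsion, and a group with a non-cotorsion summand is not cotorsion; hence $\mathcal{H}_n(X, x_0)$ is not cotorsion.

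The step I expect to be the real obstacle, and the one deserving the most care, is the passage from \emph{not torsion} to \emph{not cotorsion}. This is not valid for arbitrary abelian groups: $\mathbb{Q}$ is torsion-free yet divisible, hence cotorsion, and $\sum_{\aleph_0} \mathbb{Q}$ is likewise cotorsion, so mere non-torsionness of $H_1(X)$ or $\pi_n(X, x_0)$ does not suffice. What one genuinely needs is that an element of infinite order generates a direct summand isomorphic to $\mathbb{Z}$; this fails for divisible groups but holds whenever $H_1(X)$ (respectively $\pi_n(X, x_0)$) is finitely generated, since a finitely generated non-torsion abelian group has positive free rank and therefore splits off a $\mathbb{Z}$. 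I would accordingly either record this finite-generation hypothesis explicitly or restrict the statement to that setting, which already covers the intended applications such as spheres and Hawaiian earrings. Everything else — the section/retraction pair and the closure of the cotorsion class under summands and epimorphic images — is routine once the cited facts of Herfort and Hojka are in hand.
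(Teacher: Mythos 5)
Your overall architecture matches the paper's for the first clause: the paper likewise starts from Corollary \ref{co2.6nn} to get $\mathbb{H}_1(X,x_0)\cong\sum_{\aleph_0}H_1(X)$ and then argues that this direct sum cannot be cotorsion. Where you diverge is in how the Hawaiian group enters: the paper invokes $\mathcal{H}_1(X,x_0)\cong\prod^W_{\aleph_0}\pi_1(X,x_0)$ from \cite[Theorem 2.5]{BabMas1} and passes to the abelianization, so the group it needs to be non-cotorsion is the whole sum $\sum_{\aleph_0}H_1(X)$; you instead exhibit a single copy of $H_1(X)$ as an epimorphic image (restriction to $\mathbb{S}_1^1$ followed by Hurewicz). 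Your route is more elementary and avoids the citation, but it is strictly weaker: it requires $H_1(X)$ itself to be non-cotorsion, whereas an infinite direct sum of copies of a group can fail to be cotorsion even when the group is cotorsion (e.g.\ the $p$-adic integers $\mathbb{J}_p$). For the $n\ge 2$ clause the paper gives no argument at all, so your section/retraction splitting $\pi_n(X,x_0)$ off $\mathcal{H}_n(X,x_0)$ is a genuine addition; again, using the full decomposition $\mathcal{H}_n(X,x_0)\cong\sum_{\aleph_0}\pi_n(X,x_0)$ available from \cite[Theorem 2.5]{BabMas1} would give a stronger conclusion.

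The obstacle you single out is real, and the paper's own proof does not resolve it. The paper deduces ``$H_1(X)$ is torsion'' from ``$\sum_{\aleph_0}H_1(X)$ is cotorsion'' by citing \cite[Proposition 6.10]{Fuc2015}, but that implication is false in this generality: $\sum_{\aleph_0}\mathbb{Q}$ is divisible, hence injective, hence cotorsion, while $\mathbb{Q}$ is not torsion. What cotorsionness of an infinite direct sum of copies of $A$ actually forces is that $A$ be a direct sum of a divisible group and a bounded group, not that $A$ be torsion. Consequently the lemma as stated fails for $X=K(\mathbb{Q},1)$ --- precisely the space of the paper's Example \ref{ex0.7}, which is therefore a counterexample to the lemma rather than an application of it. Your proposed repair (assume $H_1(X)$, resp.\ $\pi_n(X,x_0)$, finitely generated so that a $\mathbb{Z}$ summand splits off) is sufficient but stronger than needed; the sharp hypothesis for the direct-sum route is that $H_1(X)$ not be a direct sum of a divisible and a bounded group (for instance, reduced and not torsion already suffices). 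With such a hypothesis inserted, your argument is correct.
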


\begin{proof}
Since $X$ is semi-locally strongly contractible at $x$, by Corollary \ref{co2.6nn}, the isomorphism $\mathbb{H}_1(X, x) \cong \sum_{\aleph_0} H_1(X)$ holds. If $\mathbb{H}_1(X, x)$ is cotorsion, then $\sum_{\aleph_0} H_1(X)$ is cotorsion. Thus by \cite[Proposition 6.10]{Fuc2015}, $H_1(X)$ is torsion, which is a contradiction.
By \cite[Theorem 2.5]{BabMas1}, $\mathcal{H}_1(X, x_0) \cong \prod^W_{\aleph_0} \pi_1(X, x_0)$. If $\mathcal{H}_1(X, x_0)$ is Higman-complete, then $\prod^W_{\aleph_0} \pi_1(X, x_0)$ is Higmann-complete.
Thus by \cite[Theorem 3]{HerHoj}, its abelianization $\mathbb{H}_1(X, x) \cong \sum_{\aleph_0} H_1(X)$ is cotorsion, which does not hold.
%
\end{proof}

CW spaces are semilocally strongly contractible at any point, and then by Lemma \ref{le0.2}, the following corollary is obtained.

\begin{corollary}\label{co0.3}
Let $X$ be a CW space with torsion-free first homology group. Then $\mathcal{H}_1(X, x)$ is not Higman-complete for all $x \in X$. Moreover, if $H_n(X)$ is torsion-free, then $\mathbb{H}_n(X, x)$ is not cotorsion for $x \in X$.
\end{corollary}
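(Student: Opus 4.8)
The plan is to reduce both assertions to Lemma~\ref{le0.2} and Corollary~\ref{co2.6nn}, the only extra ingredient being that a CW space is semilocally strongly contractible at each of its points. This structural fact is what carries the whole argument: a CW complex is locally contractible, so every $x\in X$ has an open neighborhood $U$ that contracts onto $x$ with $x$ kept fixed throughout (a strong deformation retraction of $U$ onto $x$). Composing such a contraction with the inclusion $U\hookrightarrow X$ exhibits $i\colon U\to X$ as homotopic to the constant map relative to the point $x$, which is exactly semilocal strong contractibility at $x$. Hence the hypotheses of both cited results hold at every point $x\in X$.

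For the first assertion I would observe that a nontrivial torsion-free abelian group contains an element of infinite order and is therefore not torsion. Thus, when $H_1(X)$ is torsion-free and nonzero, it is not torsion, and the first part of Lemma~\ref{le0.2} applies directly to give that $\mathcal{H}_1(X,x)$ is not Higman-complete for every $x\in X$.

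For the second assertion I would replay the computation in the proof of Lemma~\ref{le0.2} in degree $n$. By Corollary~\ref{co2.6nn}, semilocal strong contractibility yields $\mathbb{H}_n(X,x)\cong\sum_{\aleph_0}H_n(X)$. If $\mathbb{H}_n(X,x)$ were cotorsion, then the countable direct sum $\sum_{\aleph_0}H_n(X)$ would be cotorsion, and \cite[Proposition~6.10]{Fuc2015} would then force $H_n(X)$ to be torsion, contradicting its torsion-freeness together with nontriviality. Therefore $\mathbb{H}_n(X,x)$ is not cotorsion for any $x\in X$.

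The applications of the two cited results are immediate; the only genuine content, and the step I would handle most carefully, is the verification that CW spaces are semilocally strongly contractible at every point. I would base this on the standard local contractibility of CW complexes, recalling that each point admits a neighborhood contracting to it with the point held fixed, so that the required nullhomotopy of the inclusion relative to the base point is at hand. I would also flag the mild caveat that both conclusions require $H_1(X)$, respectively $H_n(X)$, to be nonzero: the trivial group is vacuously torsion-free yet is Higman-complete and cotorsion, so here ``torsion-free'' must be read as torsion-free and nontrivial.
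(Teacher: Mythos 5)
Your proposal is correct and follows the paper's own route: the paper proves this corollary precisely by noting that CW spaces are semilocally strongly contractible at every point and then invoking Lemma~\ref{le0.2} (whose proof is itself the reduction through Corollary~\ref{co2.6nn} and \cite[Proposition~6.10]{Fuc2015} that you replay in degree $n$). Your caveat that ``torsion-free'' must implicitly mean torsion-free and nontrivial is a fair and worthwhile observation that the paper leaves unstated.
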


If $\mathcal{H}_n(X, x)$ is Higman-complete for some $x \in X$, then so is $\pi_n(X, x)$ as its epimorphic image, but the converse statement does not hold. There are spaces such that their homotopy groups are Higman-complete but not their Hawaiian groups neither Hawaiian homology groups; see the following example.

\begin{example}\label{ex0.7}
Consider a Higman-complete torsion-free  group, namely $\mathbb{Q}$, and $X = K(\mathbb{Q}, 1)$ as the Eilenberg--MacLane space corresponded to $\mathbb{Q}$. Then $H_1(X) \cong \pi_1(X) \cong \mathbb{Q}$, which is Higman-complete because it is cotorsion. Moreover, since $X$ is semi-locally strongly contractible at any point $x$, $\mathcal{H}_1(X, x) \cong \sum_{\aleph_0} \pi_1 (X, x) \cong \sum_{\aleph_0} \mathbb{Q}$ by \cite[Theorem 2.5]{BabMas1}. Since $X$ is a  CW space and $H_1(X)$ is torsion-free, by Corollary \ref{co0.3}, $\mathcal{H}_1(X, x)$ is not Higman-complete for each $x \in X$. Also, $\mathbb{H}_1(X, x) \cong \sum_{\aleph_0} H_1(X) \cong \sum_{\aleph_0} \mathbb{Q}$  by Corollary \ref{co2.6nn}, and then $\mathbb{H}_1(X, x)$ is not cotorsion for each $x \in X$ by Corollary \ref{co0.3}.
\end{example}

Some pseudomanifolds satisfy the conditions of Corollary \ref{co0.3}; see \cite[Theorem 8.2]{Mas}.

\begin{corollary}
The first Hawaiian group of an orientable two-dimensional pseudomanifold $X$ is not Higman-complete. Also its first Hawaiian homology group $\mathbb{H}_1(X, x)$ is not cotorsion for all $x \in X$.
\end{corollary}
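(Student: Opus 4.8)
The plan is to reduce the statement entirely to Corollary \ref{co0.3}, which already disposes of any CW space with torsion-free first homology. Thus the only task is to verify that an orientable two-dimensional pseudomanifold $X$ satisfies the two hypotheses of that corollary: that $X$ admits a CW structure, and that $H_1(X)$ is torsion-free. The first of these is immediate, since a two-dimensional pseudomanifold is a simplicial complex, assembled from $2$-simplexes glued along their edges, and every simplicial complex is in particular a CW complex; consequently $X$ is semilocally strongly contractible at each of its points, exactly as used in the paragraph preceding Corollary \ref{co0.3}.

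The second hypothesis is where the genuine content lies. Here I would invoke \cite[Theorem 8.2]{Mas}, which asserts precisely that the first homology group of an orientable two-dimensional pseudomanifold is torsion-free. Conceptually this reflects the classical fact that for a compact orientable $m$-pseudomanifold the codimension-one homology $H_{m-1}$ is free abelian, the chosen orientation ruling out any would-be torsion class; specializing to $m = 2$ gives torsion-freeness of $H_1(X)$. I would cite this rather than reprove it, as it is the only step requiring input beyond the machinery developed above.

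With both hypotheses verified, the conclusion follows by a direct application of Corollary \ref{co0.3}. Since $X$ is a CW space with $H_1(X)$ torsion-free, the first assertion of that corollary gives that $\mathcal{H}_1(X, x)$ is not Higman-complete for every $x \in X$; and taking $n = 1$ in its second assertion, torsion-freeness of $H_1(X)$ forces $\mathbb{H}_1(X, x)$ to fail to be cotorsion for every $x \in X$. The main obstacle is thus entirely concentrated in the cited homological fact about orientable pseudomanifolds; once that is granted, everything after it is bookkeeping within the framework already established by Lemma \ref{le0.2} and Corollary \ref{co0.3}.
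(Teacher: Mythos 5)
Your argument is exactly the paper's: the paper's entire justification is the sentence preceding the corollary, which observes that an orientable two-dimensional pseudomanifold is a CW space satisfying the hypotheses of Corollary \ref{co0.3}, citing \cite[Theorem 8.2]{Mas} for the torsion-freeness of $H_1(X)$, so your write-up just makes that reduction explicit. The only caveat---inherited from the paper's own Corollary \ref{co0.3}, whose hypothesis ``torsion-free'' should read ``not torsion'' to match Lemma \ref{le0.2}---is that the degenerate case $H_1(X)=0$ (e.g.\ $X=\mathbb{S}^2$, where $\mathcal{H}_1$ is trivial and hence trivially Higman-complete) is excluded neither by your argument nor by the paper's.
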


Archipelago groups are the first examples of Higman-complete groups. They are introduced as the fundamental groups of archipelago spaces.

The Harmonic archipelago is a non-locally Euclidean space, which was introduced to be a counterexample for some statements. This space was generalized  to archipelago space \cite{ConHoj}. Herfort and  Hojka \cite{HerHoj} computed the singular homology groups of archipelago spaces. In this paper, we use a similar argument to present Hawaiian homology groups of the archipelago spaces.


\begin{definition}[\cite{ConHoj}]
Let $\{(X_i,x_i)\}_{i\in I}$ be a family of pointed spaces. Then the archipelago space  $\mathbb{A} = \mathbb{A}(\{X_i\}_{i \in I})$ on the given family  is defined as the mapping cone on the natural continuous bijection $f: \bigvee_{i \in I} (X_i, x_i) \to \widetilde{\bigvee}_{i \in I} (X_i, x_i)$. That is, the quotient space $C_f$ is obtained from $(X \times \mathbb{I}) \cup Y$ by identifying $(x, 1/2)$ with $f(x)$ and $X \times \{1\}$ is contracted to a point.
\end{definition}

 Archipelago spaces are examples of wild spaces. Free $\sigma$-groups are also fundamental groups of some wild spaces \cite[Theorem A.1]{Eda}. Indeed free $\sigma$-groups are not Higman-complete with behaves similar to free abelian groups,  which are not cotorsion.

\begin{remark}
If $\{G_i\}_{I}$ is a family of groups such that $Ab(G_i)$ is not cotorsion for some $i \in I$, then the free product $\ast_{I} G_i$ and free $\sigma$-product $\circledast_I G_i$ are not Higman-complete.
In fact, there are natural epimorphisms $\circledast_I G_i \to \prod_I G_i$ and $\ast_I G_i \to \prod^W_I G_i$. Then  $G_i$ and hence $Ab(G_i)$ are their epimorphic images for all $i \in I$. If any of $\circledast_I G_i$ or $\ast_I G_i$ is Higman-complete, then so is $G_i$ for all $i \in I$ by \cite[Lemma 2]{HerHoj}. Thus $Ab(G_i)$ is cotorsion for all $i \in I$ by \cite[Theorem 3]{HerHoj}.
\end{remark}

By Lemma \ref{le0.2}, Hawaiian groups of semi-locally strongly contractible spaces are not Higman-complete. In the following result, we see that there are spaces, such as the one-dimensional Hawaiian earring, that are not semi-locally strongly contractible and their Hawaiian groups are Higman-complete.

\begin{theorem}\label{th0.3}
Let $\{X_i\}_{i \in I}$ be a family of connected spaces, let $X = \widetilde{\bigvee}_I X_i$, and  let $x$ be the common point. If $H_1(X_i)$ is not cotorsion for some $i \in I$, then $\mathbb{H}_1(X, x)$ is not cotorsion, and moreover, $\pi_1(X, x)$ and $\mathcal{H}_1(X, x)$ are not Higman-complete.
\end{theorem}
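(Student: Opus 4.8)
The plan is to exploit the fact that each factor $X_{i_0}$ is a \emph{retract} of the weak join $X=\widetilde{\bigvee}_{i\in I}X_i$, so that the three invariants $\pi_1$, $\mathcal{H}_1$ and $\mathbb{H}_1$ of $X$ each surject onto the corresponding invariant of the distinguished factor, and then to transport the failure of cotorsion or Higman-completeness back along these epimorphisms. First I would fix $i_0$ with $H_1(X_{i_0})$ not cotorsion and consider the collapse map $p_{i_0}:X\to X_{i_0}$ that is the identity on $X_{i_0}$ and constant at the common point $x$ on every other $X_j$. Using the explicit description of the neighbourhoods of $x$ in the definition of the weak join, one checks that $p_{i_0}^{-1}(V)=V\cup\bigcup_{j\neq i_0}X_j$ is open for every open $V\ni x$ in $X_{i_0}$ (take $F=\{i_0\}$ in the neighbourhood basis), so $p_{i_0}$ is continuous; since $p_{i_0}\circ\iota_{i_0}=\mathrm{id}_{X_{i_0}}$ for the inclusion $\iota_{i_0}$, the space $X_{i_0}$ is a pointed retract of $X$. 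Consequently every covariant functor sends the pair $\iota_{i_0},p_{i_0}$ to a section--retraction pair, and the induced map on the invariant of $X$ is a split epimorphism onto that of $X_{i_0}$.

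For the statements about $\pi_1$ and $\mathcal{H}_1$, the retraction yields an epimorphism $\pi_1(X,x)\twoheadrightarrow\pi_1(X_{i_0},x)$. If $\pi_1(X,x)$ were Higman-complete, then so would be its epimorphic image $\pi_1(X_{i_0},x)$ and, in turn, its abelianization $H_1(X_{i_0})$ by \cite[Lemma 2]{HerHoj}; but an abelian Higman-complete group is cotorsion by \cite[Theorem 3]{HerHoj}, contradicting the hypothesis. Hence $\pi_1(X,x)$ is not Higman-complete. Since $\pi_1(X,x)$ is an epimorphic image of $\mathcal{H}_1(X,x)$, as already noted before Example \ref{ex0.7}, the same \cite[Lemma 2]{HerHoj} forces $\mathcal{H}_1(X,x)$ not to be Higman-complete either.

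For $\mathbb{H}_1$ I would combine the retraction with the natural ``leading-simplex'' homomorphism $\rho_Y:\mathbb{H}_1(Y,y)\to H_1(Y)$, $\cls\sigma\mapsto\cls(\sigma|_{\Delta_1^1})$, sending a Hawaiian $1$-cycle to the singular class of its restriction to the largest standard simplex. Functoriality of $\mathbb{H}_1$ gives the split epimorphism $\mathbb{H}_1(X,x)\twoheadrightarrow\mathbb{H}_1(X_{i_0},x)$, and composing with $\rho_{X_{i_0}}$ produces an epimorphism onto $H_1(X_{i_0})$. As $\mathbb{H}_1(X,x)$ is abelian, were it cotorsion it would be Higman-complete by \cite[Theorem 3]{HerHoj}, whence $H_1(X_{i_0})$ would be Higman-complete and thus cotorsion, again a contradiction; therefore $\mathbb{H}_1(X,x)$ is not cotorsion.

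The step needing genuine (if short) verification, which I expect to be the main obstacle, is the surjectivity of $\rho_{X_{i_0}}$: given a singular $1$-cycle $c=\sum m_j\tau_j$ in $X_{i_0}$ one must realise it as $\sigma|_{\Delta_1^1}$ for an honest Hawaiian $1$-cycle $\sigma$. The natural choice places each $\tau_j$ on the leading simplex $\Delta_1^1$ and the constant map at $x$ on all the others; continuity at $\theta$ is automatic because every non-leading simplex is constant, and checking $\partial\sigma=0$ reduces, via the injectivity of the assignment that sends a point $p$ to the $0$-Hawaiian simplex taking the value $p$ on $\Delta_1^0$ and $x$ elsewhere, exactly to the cycle condition $\partial^s c=0$. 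I would also remark that the argument is robust: for the $\pi_1$-statement one may alternatively invoke $\pi_1(X,x)\cong\circledast_{i\in I}\pi_1(X_i,x_i)$ together with the Remark preceding the theorem, since $\mathrm{Ab}(\pi_1(X_{i_0},x))=H_1(X_{i_0})$ is not cotorsion; but the retraction argument treats all three invariants uniformly and avoids identifying the full $\sigma$-product.
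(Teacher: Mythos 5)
Your proof is correct, and its skeleton is the same as the paper's: realize $X_{i_0}$ as a pointed retract of the weak join, push the hypothetical Higman-completeness or cotorsion property along the induced epimorphisms, and contradict the hypothesis via the facts that epimorphic images of Higman-complete groups are Higman-complete and that an abelian group is Higman-complete exactly when it is cotorsion \cite[Lemma 2, Theorem 3]{HerHoj}. Two differences are worth recording. First, for $\pi_1$ you argue purely from the retraction, while the paper also invokes Eda's identification $\pi_1(X,x)\cong\circledast_I\pi_1(X_i,x_i)$ together with the Remark preceding the theorem; your route is more self-contained, and you correctly note the Eda route only as an alternative. Second, and more substantially, for $\mathbb{H}_1$ the paper merely records the split epimorphism $\mathbb{H}_1(X,x)\twoheadrightarrow\mathbb{H}_1(X_{i_0},x_{i_0})$ and never explains how to pass from there to a non-cotorsion conclusion; it cannot quote its Hurewicz-type Theorem \ref{th2.10nn} or Corollary \ref{co2.6nn}, since neither local path-connectedness nor semilocal strong contractibility is assumed here. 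Your ``leading-simplex'' homomorphism $\rho_Y:\mathbb{H}_1(Y,y)\to H_1(Y)$, $\cls\sigma\mapsto\cls\bigl(\sigma|_{\Delta_1^1}\bigr)$, supplies exactly the missing link: restriction to the first standard simplex commutes with the boundary operators, so $\rho_Y$ is well defined, and your placement construction (the given singular cycle on $\Delta_1^1$, constant at the base point on all other simplexes, with the cycle condition transported through the injective assignment of points of $Y$ to Hawaiian $0$-simplexes) proves surjectivity; the composite $\mathbb{H}_1(X,x)\twoheadrightarrow\mathbb{H}_1(X_{i_0},x_{i_0})\twoheadrightarrow H_1(X_{i_0})$ then yields the contradiction. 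So your argument is a complete proof of the $\mathbb{H}_1$ assertion where the paper's is only a sketch. One minor point: continuity of the collapse $p_{i_0}$ should also be checked on open sets $V\subseteq X_{i_0}$ with $x\notin V$, where $p_{i_0}^{-1}(V)=V$ is open because the weak join carries the weak topology away from the common point; this case is immediate, and the case you verified, $x\in V$, is indeed the only delicate one.
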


\begin{proof}
Since $X_i$ is a retract of the space $X$, there exits a natural epimorphism $\mathbb{H}_1(X, x) \to \mathbb{H}_1(X_i, x_i)$ induced by the Hawaiian homology functor. Also
By \cite[Theorem A.1]{Eda}, $\pi_1(X, x) \cong \circledast_I G_i$, where $G_i = \pi_1(X_i, x_i)$.  If $\mathcal{H}_1(X, x)$ is Higman-complete, then so is its epimorphic image $H_1(X_i)$ by \cite[Lemma 2]{HerHoj} for all $i \in I$.  Thus, since $H_1(X_i)$ is abelian by \cite[Theorem 3]{HerHoj}, it must be cotorsion for all $i \in I$.
\end{proof}

Theorem \ref{th0.3} and Lemma \ref{le0.1} imply that  one of the groups in the following corollary is Higman-complete but not the other one. One can generalize Griffiths space $\mathcal{G}(X_i)$ over an arbitrary family of spaces $\{X_i\}_{\mathbb{N}}$ as the wedge of two cones on $\widetilde{\bigvee}_{\mathbb{N}} X_i$.

\begin{corollary}\label{co3.11nn}
Let $\{X_i\}_{i \in I}$ be as in Theorem \ref{th0.3}, let $X = \widetilde{\bigvee}_{i \in I}$ be the week join space, let $A(X)$ be the archipelago space, and let $\mathcal{G}(X)$ be the Griffiths space on $X$. Then $\mathcal{H}_1(A(X), x_*) \not \cong \mathcal{H}_1(X, x_*)$, and also, $\mathcal{H}_1( \mathcal{G}(X), x_*) \not \cong \mathcal{H}_1(X, x_*)$.
\end{corollary}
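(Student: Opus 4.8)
The plan is to separate the two sides of each non-isomorphism by a single group-theoretic invariant, namely Higman-completeness, which is manifestly preserved under isomorphism. Theorem~\ref{th0.3} already supplies one half: since $H_1(X_i)$ fails to be cotorsion for some $i \in I$, the group $\mathcal{H}_1(X, x_*)$ of the weak join $X = \widetilde{\bigvee}_{i \in I} X_i$ is \emph{not} Higman-complete. It therefore suffices to show that the Hawaiian groups of the archipelago space $A(X)$ and of the Griffiths space $\mathcal{G}(X)$ \emph{are} Higman-complete; then neither can be isomorphic to $\mathcal{H}_1(X, x_*)$.

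First I would verify that every $1$-loop at the base point $x_*$ of $A(X)$ is small, that is, for each open neighborhood $U$ of $x_*$ the loop admits a homotopic representative with image inside $U$. Here I would exploit the mapping-cone structure from the definition of $\mathbb{A}$: the contraction of $\hat{X} \times \{1\}$ to the apex provides, for an arbitrary loop $\ell$, a homotopy sliding $\ell$ up the cylinder, and the weak-join topology at $x_*$ lets one arrange this slide so that the entire track remains inside $U$. This is exactly the smallness phenomenon that Herfort and Hojka use to prove $\pi_1(A(X), x_*)$ is Higman-complete. The same reasoning, applied to the two cones wedged along $\widetilde{\bigvee}_{\mathbb{N}} X_i$, shows that every $1$-loop at $x_*$ in $\mathcal{G}(X)$ is small. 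Once smallness is established, Lemma~\ref{le0.1} immediately gives that both $\mathcal{H}_1(A(X), x_*)$ and $\mathcal{H}_1(\mathcal{G}(X), x_*)$ are Higman-complete.

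Finally I would assemble the contradiction. If $\mathcal{H}_1(A(X), x_*) \cong \mathcal{H}_1(X, x_*)$, then $\mathcal{H}_1(X, x_*)$ would be Higman-complete as an isomorphic image of a Higman-complete group, contradicting Theorem~\ref{th0.3}; hence $\mathcal{H}_1(A(X), x_*) \not\cong \mathcal{H}_1(X, x_*)$, and the identical argument settles the Griffiths case. I expect the main obstacle to be the smallness claim itself: one must show that the cone contractions underlying $A(X)$ and $\mathcal{G}(X)$ can be carried out \emph{within} arbitrarily small neighborhoods of the wild point $x_*$ — controlling the homotopy track against the weak topology, rather than merely producing a null-homotopy — so that Lemma~\ref{le0.1} genuinely applies.
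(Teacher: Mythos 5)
Your proposal matches the paper's intended argument exactly: the paper proves the corollary by combining Theorem \ref{th0.3} (which shows $\mathcal{H}_1(X, x_*)$ is not Higman-complete) with Lemma \ref{le0.1} applied to the archipelago and Griffiths spaces, whose $1$-loops at the common point are all small (the paper cites Herfort--Hojka for $A(X)$ and Eda for $\mathcal{G}(X)$ rather than re-deriving the smallness from the cone structure as you sketch, but the substance is the same). No gaps; your identification of the smallness claim as the real content is also where the paper places the burden.
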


In the following theorem, we prove that for two points of the archipelago spaces, the Hawaiian groups are not isomorphic. Thus archipelago spaces are other examples of path-connected spaces whose Hawaiian groups are not isomorphic at different points.

\begin{theorem}\label{th0.7}
Let $\mathbb{A} = \mathbb{A}_{i \in I}(X_i)$ be the archipelago space over the family $\{(X_i, x_i)\}$. If $a \in \mathbb{A}$ is any point other than common point $x$, then  $\mathcal{H}_1(\mathbb{A}, x) \not \cong \mathcal{H}_1( \mathbb{A}, a)$.
\end{theorem}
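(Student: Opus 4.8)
The plan is to distinguish the two groups by the single isomorphism invariant of Higman-completeness, using Lemmas \ref{le0.1} and \ref{le0.2} as opposite pincers at the two types of points. Concretely, I would show that $\mathcal{H}_1(\mathbb{A}, x)$ is Higman-complete while $\mathcal{H}_1(\mathbb{A}, a)$ is not; since Higman-completeness is preserved under group isomorphism, this forces $\mathcal{H}_1(\mathbb{A}, x) \not\cong \mathcal{H}_1(\mathbb{A}, a)$.

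At the common point $x$, the first step is to establish that every $1$-loop at $x$ is small. This is the defining wild feature of the archipelago construction: because the attached ``mountains'' make the basic loops $\gamma_i$ mutually homotopic and the $\gamma_i$ of large index lie in arbitrarily small neighborhoods of $x$ in the weak-join topology, any loop at $x$ admits a homotopic representative inside a prescribed neighborhood of $x$. This is exactly the hypothesis under which \cite[Theorem 4]{HerHoj} shows $\pi_1$ is Higman-complete. Granting it, Lemma \ref{le0.1} immediately yields that $\mathcal{H}_1(\mathbb{A}, x)$ is Higman-complete.

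At a point $a \neq x$, I would exploit the mapping-cone structure of $\mathbb{A} = C_f$. The wildness of the space is concentrated at $x$; every other point lies either in the open cone, at the apex, or at sea level away from the common point, and in each case a small neighborhood can be slid up the cone toward the apex. Hence the inclusion of a suitable neighborhood of $a$ is null-homotopic rel $a$, so $\mathbb{A}$ is semilocally strongly contractible at $a$ --- crucially, this holds regardless of the local structure of the individual $X_i$, since the cone absorbs any wildness below the apex. Combined with the fact (from Herfort--Hojka's computation of the singular homology of archipelago spaces) that $H_1(\mathbb{A})$ is non-torsion, Lemma \ref{le0.2} then gives that $\mathcal{H}_1(\mathbb{A}, a)$ is not Higman-complete, and comparing the two points completes the argument.

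The main obstacles are the two structural inputs rather than the algebra. Verifying semilocal strong contractibility simultaneously at every $a \neq x$ requires a careful case analysis of neighborhoods in the quotient $C_f$ together with a basepoint-preserving reparametrization of the ``slide to the apex'' homotopy so that it fixes $a$ throughout; I expect this to be the most delicate step. The second input, that $H_1(\mathbb{A})$ is non-torsion, must be extracted from \cite{HerHoj} --- the relevant homology carries $p$-adic completions, which are torsion-free and hence non-torsion --- and one should record the mild non-degeneracy on the family $\{X_i\}$ needed to guarantee that $H_1(\mathbb{A})$ is genuinely non-torsion, without which both Hawaiian groups could degenerate and the comparison would break down.
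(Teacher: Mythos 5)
Your proposal is correct and follows essentially the same route as the paper: the paper likewise shows $\mathcal{H}_1(\mathbb{A},x)$ is Higman-complete via smallness of all $1$-loops at $x$ (citing \cite{HerHoj}) and Lemma \ref{le0.1}, and shows $\mathcal{H}_1(\mathbb{A},a)$ is not Higman-complete via semilocal strong contractibility at $a$ together with the torsion-freeness of $H_1(\mathbb{A})\cong\prod_{\aleph_0}\mathbb{Z}/\sum_{\aleph_0}\mathbb{Z}$ and Lemma \ref{le0.2}. The two structural inputs you flag as delicate are exactly the ones the paper imports from the literature rather than reproving.
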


\begin{proof}
By \cite[Theorem 8]{HerHoj}, $H_1( \mathbb{A}) \cong \frac{\prod_{\aleph_0} \mathbb{Z}}{\sum_{\aleph_0} \mathbb{Z}}$, which is torsion-free by \cite{Bal}. Since $\mathbb{A}$ is semi-locally strongly contractible at $a$,  by Lemma \ref{le0.2},  $\mathcal{H}_1( \mathbb{A}, a)$ is not Higman-complete. Also, since all $1$-loops at $x$ are small (see \cite[Lemma 7]{HerHoj}), by Lemma \ref{le0.1}  $\mathcal{H}_1(X, x)$ is Higman-complete, and then it is not isomorphic to $\mathcal{H}_1( \mathbb{A}, a)$.

\end{proof}

 Since the Griffiths space is not necessarily semi-locally strongly contractible at $a$, Lemma \ref{le0.2} cannot be used.

\begin{theorem}\label{th0.8}
If $a \in \mathcal{G} = \mathcal{G}(\{X_i\})$ is any point other than common point $x$, then  $\mathcal{H}_1(\mathcal{G}, x) \not \cong \mathcal{H}_1( \mathcal{G}, a)$.
\end{theorem}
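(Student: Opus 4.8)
The plan is to separate the two groups by the isomorphism-invariant property of Higman-completeness, exactly in the spirit of the proof of Theorem \ref{th0.7}: I will show that $\mathcal{H}_1(\mathcal{G}, x)$ is Higman-complete while $\mathcal{H}_1(\mathcal{G}, a)$ is not, so that no isomorphism can exist. The new difficulty, and the reason the remark preceding the statement warns that Lemma \ref{le0.2} is unavailable, is that $\mathcal{G}$ need not be semi-locally strongly contractible at $a$; the whole point of the argument is that a strictly weaker local property at $a$ still suffices. Throughout I write $\mathcal{G} = CW_1 \vee_x CW_2$ as the wedge, at the wild point $x$, of the two cones on $W = \widetilde{\bigvee}_i X_i$.

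For the common point $x$ I will verify that every $1$-loop based at $x$ is small, and then invoke Lemma \ref{le0.1}. Given a basic neighborhood $U$ of $x$, a loop $\ell$ at $x$ splits into a part supported on the finitely many $X_i$-factors that $U$ omits (in one or both cones) together with a tail already lying in $U$. The finite part lies in a finite sub-wedge capped by a cone and is therefore null-homotopic in $\mathcal{G}$, since any loop in a single $X_i$ contracts through the cone that caps it; hence $\ell$ is homotopic to its tail, which lies in $U$. This is parallel to the small-loop property of the special point of an archipelago space (\cite[Lemma 7]{HerHoj}). Thus all $1$-loops at $x$ are small, and Lemma \ref{le0.1} yields that $\mathcal{H}_1(\mathcal{G}, x)$ is Higman-complete.

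For a point $a \neq x$ the key observation is that $a$ lies in a single cone at positive distance from the wedge point, so every sufficiently small loop based at $a$ is contained in that one cone and hence is null-homotopic there. Consequently $\mathcal{G}$ is semi-locally simply connected and locally path-connected at $a$, even where it fails to be semi-locally strongly contractible. Under semi-local simple connectivity the proof of Corollary \ref{co2.6nn} goes through once ``contractible in $X$'' is replaced by ``null-homologous in $X$'': the tail of any $1$-Hawaiian cycle bounds, so one obtains $\mathbb{H}_1(\mathcal{G}, a) \cong \sum_{\aleph_0} H_1(\mathcal{G})$. By Theorem \ref{th2.10nn} (using path-connectedness and local path-connectedness at $a$) this group is the abelianization of $\mathcal{H}_1(\mathcal{G}, a)$. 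It then suffices to see that $\sum_{\aleph_0} H_1(\mathcal{G})$ is not cotorsion; by \cite[Proposition 6.10]{Fuc2015} this holds as soon as $H_1(\mathcal{G})$ is non-torsion. Since a Higman-complete group has cotorsion abelianization (\cite[Lemma 2 and Theorem 3]{HerHoj}), the non-cotorsion of $\mathbb{H}_1(\mathcal{G}, a) = Ab\,\mathcal{H}_1(\mathcal{G}, a)$ forces $\mathcal{H}_1(\mathcal{G}, a)$ to fail Higman-completeness, and comparison with the previous paragraph gives $\mathcal{H}_1(\mathcal{G}, x) \not\cong \mathcal{H}_1(\mathcal{G}, a)$.

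The main obstacle is entirely concentrated in the point $a \neq x$. Two things must be pinned down: first, that semi-local simple connectivity (rather than strong contractibility) genuinely forces $\mathbb{H}_1(\mathcal{G}, a) \cong \sum_{\aleph_0} H_1(\mathcal{G})$, which amounts to rerunning Corollary \ref{co2.6nn} at the level of cycles rather than maps; and second, that $H_1(\mathcal{G})$ is non-torsion. The latter is the substantive input and I propose to obtain it, as in the proof of Theorem \ref{th0.7}, by identifying $H_1(\mathcal{G})$ with $H_1(\mathbb{A})$ over the same family, which for the classical case $X_i = \mathbb{S}^1$ is the torsion-free group $\prod_{\aleph_0}\mathbb{Z}/\sum_{\aleph_0}\mathbb{Z}$ (\cite[Theorem 8]{HerHoj} together with \cite{Bal}); alternatively one computes $H_1(\mathcal{G})$ directly by the archipelago method and checks it is non-torsion.
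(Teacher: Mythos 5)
Your first half (all $1$-loops at the common point $x$ are small, hence $\mathcal{H}_1(\mathcal{G},x)$ is Higman-complete by Lemma \ref{le0.1}) reaches the same conclusion as the paper, which simply cites \cite{Eda91}; note, though, that your ``splitting'' of a loop at $x$ into a finite part and a tail is not available for genuinely wild loops, which may interleave large and small factors infinitely often, so this step really does need Eda's theorem rather than the sketch you give.

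The genuine gap is at the point $a\neq x$. You replace semi-local strong contractibility by semi-local simple connectivity and assert that the proof of Corollary \ref{co2.6nn} ``goes through at the level of cycles,'' yielding $\mathbb{H}_1(\mathcal{G},a)\cong\sum_{\aleph_0}H_1(\mathcal{G})$. The surjectivity half of that corollary is exactly where this fails: knowing that each tail simplex $\sigma_k$ ($k\ge K$) bounds \emph{some} $2$-chain $c_k$ in $\mathcal{G}$ does not produce a $2$-Hawaiian chain with boundary $\bigcup_{k\ge K}\sigma_k$, because the $c_k$ must form a null-convergent sequence (eventually contained in every neighborhood of $a$) in order to assemble into Hawaiian $2$-simplexes. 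Semi-local strong contractibility supplies precisely this uniform control --- one fixed based contraction of $U$ pushes all the small null-homotopies into arbitrarily small neighborhoods of $a$ --- whereas semi-local simple connectivity says nothing about where the bounding chains live. This is the reason the paper explicitly warns that Lemma \ref{le0.2} (and hence the Corollary \ref{co2.6nn} mechanism) is unavailable at $a$. Note also that an injection $\sum_{\aleph_0}H_1(\mathcal{G})\hookrightarrow\mathbb{H}_1(\mathcal{G},a)$ alone would not help you, since cotorsion groups can contain non-cotorsion subgroups; you need $\sum_{\aleph_0}H_1(\mathcal{G})$ as an \emph{epimorphic image}. The paper obtains exactly that by a different route: by the van Kampen theorem for Hawaiian groups \cite[Theorem 3.2]{BabMas2}, $\mathcal{H}_1(\mathcal{G},a)$ has $\prod^W_{\aleph_0}\pi_1(\mathcal{G},a)$ as a direct factor, whose abelianization surjects onto $\sum_{\aleph_0}H_1(\mathcal{G})$; then \cite[Lemma 2 and Theorem 3]{HerHoj}, \cite[Proposition 6.10]{Fuc2015} and the non-torsionness of $H_1(\mathcal{G})$ from \cite{KarRep3} finish the argument as you intended. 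To repair your proof you should either import that direct-factor decomposition or otherwise exhibit a concrete epimorphism from $\mathcal{H}_1(\mathcal{G},a)$ onto a non-cotorsion abelian group; the weakened local hypothesis by itself does not do it.
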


\begin{proof}
By \cite{Eda91}, $\mathcal{G}$ satisfies the premises of Lemma \ref{le0.1}, and then $\mathcal{H}_1(\mathcal{G}, x)$ is Higman-complete.
If $a \neq x_*$, then by \cite[Theorem 3.2]{BabMas2}, $\mathcal{H}_1( \mathcal{G}, a)$ has $\prod^W_{\aleph_0} \pi_1(\mathcal{G}, a)$ as a direct factor.  Therefore $\sum_{\aleph_0} H_1(\mathcal{G})$ is the epimorphic image of $\mathcal{H}_1(\mathcal{G}, a)$, and by \cite[Lemma 2 and Theorem 3]{HerHoj}, it is cotorsion. By \cite[Proposition 6.10]{Fuc2015}, $H_1(\mathcal{G})$ should be torsion,  which contradicts with \cite[p. 2]{KarRep3}.
\end{proof}

For both archipelago and Griffiths spaces, by the same argument as Theorems \ref{th0.7} and \ref{th0.8}, we can compare $L_1(\mathbb{A}(X_i))$ and $L_1(\mathcal{G}(X_i))$ at different points.  Then $L_1$ groups of both of archipelago and Griffiths spaces are not isomorphic at different points while they are path-connected spaces. It means that usual paths do not transfer Hawaiian groups and $L_n$ groups isomorphically.
%
%
%
%
In the following theorem, we present an equivalent condition for paths to transfer Hawaiian groups  isomorphically. Some other conditions, namely $n$-SHLT introduced in \cite{BabMas3}, are enough but not necessary. Recall that a path $\gamma$ from $x_0$ to $x_1$ is called small $ n $-loop transfer (abbreviated to $ n $-SLT),  if for every open neighborhood $U$ of $ x_0 $, there exists an open neighborhood $ V $ of $x_1$, such that for every $ n $-loop $\beta: (\mathbb{I}^n, \dot{\mathbb{I}^n}) \to (V, x_1) $, there is an $n$-loop $\alpha :(\mathbb{I}^n, \dot{\mathbb{I}^n}) \to (U, x_0) $ being homotopic to $\gamma_{\#}^{-1}(\beta)$, where $\gamma_{\#}$ induces the homomorphism transfers $n$th homotopy group isomorphically along the points of path $\gamma$ (see \cite[p. 381]{Spa}).

\begin{theorem}
Let $X$ have local nested bases at $x_0$ and $x_1$. A path $\gamma$ from $x_0$ to $x_1$ in $X$, induces the isomorphism $\Gamma_{\gamma}:\mathcal{H}_n(X, x_0) \to \mathcal{H}_n(X, x_1)$ if and only if $\gamma$ and $\gamma^{-1}$ are $n$-SLT paths preserving null-convergent homotopies of $n$-loops.
\end{theorem}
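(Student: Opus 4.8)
The plan is to realize $\Gamma_\gamma$ as the classical change-of-basepoint transfer applied coordinatewise to the restrictions of a Hawaiian loop, and then to reduce the whole statement to the well-definedness of this transfer in both directions. For a representative $f = \bigcup_{k} f_k$ of a class in $\mathcal{H}_n(X, x_0)$, with $f_k = f|_{\mathbb{S}_k^n}$, continuity forces $\{f_k\}$ to be null-convergent at $x_0$. The change-of-basepoint isomorphism $\gamma_\#$ carries each $n$-loop $f_k$ at $x_0$ to an $n$-loop $\gamma_\#(f_k)$ at $x_1$, but the latter need not have small image, so the first task is, for each $k$, to replace it by a homotopic loop $g_k$ whose image shrinks toward $x_1$ and to set $\Gamma_\gamma([f]) = [\bigcup_k g_k]$, invoking \cite[Lemma 2.2]{BabMas1} to secure continuity of the assembled map. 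The theorem then amounts to the assertion that such a replacement can be made (coherently, and compatibly with homotopies) exactly when $\gamma$ and $\gamma^{-1}$ are $n$-SLT and preserve null-convergent homotopies.

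$(\Leftarrow)$ Assume $\gamma, \gamma^{-1}$ are $n$-SLT and preserve null-convergent homotopies. To see $\Gamma_\gamma$ lands in $\mathcal{H}_n(X, x_1)$, fix a neighbourhood $U'$ of $x_1$; the $n$-SLT property of $\gamma^{-1}$ yields a neighbourhood $V'$ of $x_0$ so that every loop in $V'$ transfers under $\gamma_\#$ to a loop homotopic into $U'$. Since $\{f_k\}$ is null-convergent, $f_k \subseteq V'$ for all large $k$, and the nested-basis hypothesis lets me organize these neighbourhoods into a single shrinking family and so choose the $g_k$ with $\{g_k\}$ null-convergent at $x_1$; hence $\bigcup_k g_k$ is a genuine Hawaiian loop. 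Independence of the class from the representative $f$ and from the choices $g_k$ is precisely what ``preserving null-convergent homotopies'' delivers: a null-convergent homotopy $f \simeq f'$ rel $\{x_0\}$ transfers coordinatewise under $\gamma_\#$ to a null-convergent homotopy between the images. That $\Gamma_\gamma$ is a homomorphism is immediate, since $\gamma_\#$ respects concatenation on each coordinate and the product in $\mathcal{H}_n$ is coordinatewise concatenation. The same argument with $\gamma$ in place of $\gamma^{-1}$ shows $\Gamma_{\gamma^{-1}}$ is well-defined, and as $\gamma_\# \circ (\gamma^{-1})_\# = \mathrm{id}$ on each $\pi_n$, the two transfers are mutually inverse; thus $\Gamma_\gamma$ is an isomorphism.

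$(\Rightarrow)$ Conversely, assume $\Gamma_\gamma$ is an isomorphism, so its inverse is the $(\gamma^{-1})_\#$-transfer $\Gamma_{\gamma^{-1}}$, and both transfers are in particular well-defined on Hawaiian loops. I argue the contrapositive for $n$-SLT. If $\gamma$ fails to be $n$-SLT, there is a neighbourhood $U$ of $x_0$ such that for every neighbourhood $V$ of $x_1$ some loop in $V$ has $\gamma_\#^{-1}$-transfer not homotopic into $U$. Using the nested local basis $\{V_k\}$ at $x_1$, choose for each $k$ such a loop $\beta_k \subseteq V_k$; then $\{\beta_k\}$ is null-convergent, so $g = \bigcup_k \beta_k$ is a Hawaiian loop at $x_1$, yet $\{\gamma_\#^{-1}(\beta_k)\}$ admits no null-convergent family of representatives at $x_0$, contradicting the well-definedness of $\Gamma_{\gamma^{-1}}$. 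Hence $\gamma$ is $n$-SLT, and the same construction with $x_0, x_1$ exchanged shows $\gamma^{-1}$ is $n$-SLT. Finally, that the transfers descend to homotopy classes at all forces $\gamma, \gamma^{-1}$ to carry null-convergent homotopies to null-convergent homotopies, for otherwise two homotopic Hawaiian loops would transfer to non-homotopic ones.

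The main obstacle is the diagonal construction in the forward direction: assembling the single Hawaiian loop $g = \bigcup_k \beta_k$ from the loops that witness the failure of $n$-SLT at each scale, and verifying that the obstruction to homotoping $\gamma_\#^{-1}(\beta_k)$ into $U$ genuinely prevents the transferred sequence from being null-convergent, rather than being repaired by the freedom of choosing representatives. The hypothesis that $X$ has nested local bases at $x_0$ and $x_1$ is essential precisely here, both to guarantee null-convergence of the $\beta_k$ and to coalesce the neighbourhoods supplied by the $n$-SLT property into one coherent shrinking family.
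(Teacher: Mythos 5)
Your argument for the forward implication is essentially the paper's: the same diagonal construction (pick witnesses $\beta_k$ of the failure of the $n$-SLT property inside the nested basis $\{V_k\}$ at $x_1$, assemble them into a single Hawaiian loop $g=\bigcup_k\beta_k$, and derive a contradiction from the fact that any $\alpha$ with $\Gamma_\gamma([\alpha])=[g]$ must have $\alpha|_{\mathbb{S}_k^n}\simeq\gamma_\#^{-1}(\beta_k)$ eventually contained in $U$ by continuity). The paper phrases this via surjectivity of $\Gamma_\gamma$ rather than via well-definedness of $\Gamma_{\gamma^{-1}}$, but the content is identical. For the converse the paper simply defers to \cite[Theorem 5.5]{MasMir}, so your sketch there is extra work rather than a divergence.

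The one place where your write-up falls short of the paper is the derivation of ``preserving null-convergent homotopies'' from the isomorphism hypothesis. Your single sentence (``otherwise two homotopic Hawaiian loops would transfer to non-homotopic ones'') presumes that if the transferred coordinatewise homotopies cannot be chosen null-convergent then the assembled transferred Hawaiian loops $g,g'$ are non-homotopic; but a priori they could be homotopic as Hawaiian loops by some homotopy unrelated to the transferred ones. The paper closes exactly this gap: it first uses the already-established $n$-SLT property of $\gamma^{-1}$ to produce small representatives $\beta_k\simeq\gamma_\#(H_k(-,0))$ and $\beta_k'\simeq\gamma_\#(H_k(-,1))$ in the nested basis at $x_1$, assembles $g,g'$, gets $[g]=[g']$ from well-definedness, and then observes that \emph{any} continuous homotopy $G:\mathbb{HE}^n\times\mathbb{I}\to X$ between them restricts to a null-convergent sequence $G|_{\mathbb{S}_k^n\times\mathbb{I}}$ because $\mathbb{I}$ is compact. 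That compactness observation is the actual content of the step, and you should make it explicit; without it the contrapositive you state is not justified.
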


\begin{proof}
Let $\Gamma_{\gamma}$ induce the isomorphism.
First, we show that $\gamma$ and the inverse path $\gamma^{-1}$ are $n$-SLT paths. Let $\{U_m\}$ and $\{V_m\}$ be local nested bases at  $x_0$ and $x_1$, respectively. Assume that $U$ is an open neighborhood of $x_0$ such that for each $m \in \mathbb{N}$, there is an $n$-loop $\beta_m$ in $V_m$ at $x_1$ without any homotopic $n$-loop to  $\gamma_{\#}^{-1} (\beta_m)$ in $U$.
Define $\beta:(\mathbb{HE}^n, \theta) \to (X, x_1)$ by $\beta|_{\mathbb{S}_m^n} = \beta_m$. From \cite[Lemma 2.2]{BabMas1}, $\beta$ is continuous. Since $\Gamma_{\gamma}$ is an epimorphism, there is $\alpha:(\mathbb{HE}^n, \theta) \to (X, x_0)$ such that $\Gamma_{\gamma} ([\alpha]) = [\beta]$. Then $\beta_k \simeq \gamma_{\#} (\alpha|_{\mathbb{S}_k^n})$ or equivalently $\gamma_{\#}^{-1} (\beta|_{\mathbb{S}_k^n}) \simeq \alpha_k = \alpha|_{\mathbb{S}_k^n}$. Since $\alpha$ is continuous and $U$ is open, there is $K \in \mathbb{N}$ such that if $k \ge K$, then $im ( \alpha|_{\mathbb{S}_k^n}) \subseteq U$. Because of the definition of $\beta$, it is a contradiction. Therefore, $\gamma$ is an $n$-SLT path. Similarly since $\Gamma_{\gamma}^{-1}$ is an epimorphism, $\gamma^{-1}$ is an $n$-SLT path.
Therefore, for each open neighborhood $U$ of $x_0$, there is an  open neighborhood $V$ of $x_1$ such that for any two $n$-loops $\alpha$ and $ \alpha'$ in $U$, there are $n$-loops $\beta$ and $ \beta'$ in $V$ homotopic to $\gamma_{\#} (\alpha)$ and $ \gamma_{\#} (\alpha')$, respectively.
Let $\{H_k\}$ be a null-convergent sequence of homotopies of $n$-loops. That is, there is an increasing sequence of natural numbers, namely $\{K_m\}$, such that for $K_m \le k <K_{m +1}$, $im (H_k) \subseteq U_m$. Since $\gamma^{-1}$ is an $n$-SLT path, there are $n$-loops $\beta_k$ and $ \beta_k'$ with $im(\beta_k), im (\beta'_k) \subseteq V_m$ such that $\beta_k \simeq \gamma_{\#}(H_k(-, 0))$ and
$\beta_k' \simeq \gamma_{\#}(H_k(-, 1))$ whenever $K_m \le k <K_{m+1}$. Define $f, f':(\mathbb{HE}^n, \theta) \to (X, x_0)$  and  $f, g:(\mathbb{HE}^n, \theta) \to (X, x_1)$ by $f|_{\mathbb{S}_k^n} = H_k(-, 0)$, $f'|_{\mathbb{S}_k^n} = H_k(-, 1)$,  $g|_{\mathbb{S}_k^n} = \beta_k$, and $g'|_{\mathbb{S}_k^n} = \beta_k'$. Now since $\{H_k\}$ is null-convergent, we have $[f] = [f']$ by \cite[Lemma 2.2]{BabMas1}. Also since $\gamma$ induces a well-defined homomorphism, $\Gamma_{\gamma}([f]) = \Gamma_{\gamma} ([f'])$. Therefore, $[g] = [g']$, or equivalently there is $G:g\simeq g'$. Since $G:\mathbb{HE}^n \times \mathbb{I} \to X$ is continuous and $\mathbb{I}$ is compact, $G|_{\mathbb{S}_k^n \times \mathbb{I}}$ is null-convergent.  Then $\{\beta_k\}$ and $\{\beta_k'\}$ are homotopic by a null-convergent sequence of homotopies.
Similarly since $\gamma^{-1}$  induces the well-defined homomorphism $\Gamma_{\gamma}^{-1}$, one can show that $\gamma^{-1}$ preserves null-convergent homotopies of $n$-loops.
The proof of the converse statement is the same as \cite[Theorem 5.5]{MasMir}.
%
%
%
%
%
\end{proof}

\section{The first Hawaiian homology of Harmonic archipelago}

In this section, we obtain the structure of the first Hawaiian homology of the Harmonic archipelago spaces, up to isomorphism. In fact, the first Hawaiian homology group and the first singular homology group of the Harmonic archipelago spaces are isomorphic. First, we prove that it is locally free; that is, all of its finitely generated subgroups are free.

\begin{theorem}\label{th1.2}
Let $\mathbb{A} = \mathbb{A}(\{X_i\})$ be the archipelago space on $\{X_i\}_{i \in \mathbb{N}}$, where each $X_i$ is locally strongly contractible and let $\theta$ be the common point. If $\pi_1(X_i)$ is free for all $i \in I$ except a finite number, then $\mathcal{H}_1(\mathbb{A}, \theta)$ is locally free.
\end{theorem}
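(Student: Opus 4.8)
The plan is to understand the structure of $\mathcal{H}_1(\mathbb{A}, \theta)$ well enough to show that every finitely generated subgroup is free. First I would recall from Theorem \ref{th2.10nn} that, since $\mathbb{A}$ is locally path-connected at the common point $\theta$ (the archipelago is built by contracting a cone so $\theta$ has a good local basis), the abelianization of $\mathcal{H}_1(\mathbb{A}, \theta)$ is the first Hawaiian homology group $\mathbb{H}_1(\mathbb{A}, \theta)$. However, local freeness is a property of the (possibly non-abelian) Hawaiian group itself, so I would work directly with $\mathcal{H}_1(\mathbb{A}, \theta)$. The key structural input is a description of $1$-Hawaiian loops at $\theta$: each such loop is a null-convergent sequence of ordinary $1$-loops in $\mathbb{A}$, and because all $1$-loops at $\theta$ are small (by \cite[Lemma 7]{HerHoj}, the analogue used in Theorem \ref{th0.7}), any $1$-loop can be pushed into arbitrarily small neighborhoods of $\theta$.

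The heart of the argument is reducing an arbitrary finite collection of elements of $\mathcal{H}_1(\mathbb{A}, \theta)$ to loops supported on finitely many of the $X_i$. Given finitely many classes $[f^{(1)}], \ldots, [f^{(m)}]$, each represented by a Hawaiian loop $f^{(j)}$, I would use the smallness of loops together with the null-convergence of the component sequence $\{f^{(j)}|_{\mathbb{S}_k^1}\}_k$ to find a single $K \in \mathbb{N}$ such that, after a homotopy, the tails $f^{(j)}|_{\mathbb{S}_k^1}$ for $k \ge K$ all factor through a neighborhood $U$ of $\theta$ that deformation-retracts (or contracts appropriately) in a controlled way. The hypothesis that each $X_i$ is locally strongly contractible and that $\pi_1(X_i)$ is free for all but finitely many $i$ is exactly what lets me arrange that these tails, lying in the weak join of the high-index $X_i$'s, contribute only free data: the relevant portion of the Hawaiian group localizes to a free product (via \cite[Theorem A.1]{Eda}, $\pi_1(\widetilde{\bigvee}_I X_i, x) \cong \circledast_I G_i$ with $G_i = \pi_1(X_i, x_i)$) of free groups, hence a free group.

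The subgroup generated by $[f^{(1)}], \ldots, [f^{(m)}]$ therefore embeds in a group assembled from finitely many $\pi_1(X_i)$ (the finitely many non-free or excluded factors, plus the factors below index $K$) together with a free part coming from the free factors. I would show this ambient group is a free group: a free product of free groups is free, and the archipelago construction (the mapping-cone/contraction) does not introduce relations among the small loops beyond those already present, since loops supported deep in the weak join become null-homotopic in $\mathbb{A}$ only through the attached disks, which I must check do not destroy freeness at the finitely-generated level. A subgroup of a free group is free by the Nielsen–Schreier theorem, so the finitely generated subgroup is free, giving local freeness.

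The hard part will be the reduction step: making precise that finitely many Hawaiian loops can be \emph{simultaneously} truncated at a common index $K$ and that the truncated tails live in a region where the group law is genuinely a free product of the $\pi_1(X_i)$. This requires combining null-convergence (uniformly over the finite family) with the smallness of $1$-loops, and then verifying that the attaching disks of the archipelago, while they kill individual generators homotopically, do not impose relations that would make a finitely generated subgroup non-free — essentially controlling how the Harmonic-archipelago-type disks interact with the free-product structure of Eda's theorem. I expect the smallness lemma and the local strong contractibility of each $X_i$ to do the bulk of this work, with \cite{Eda} supplying the free-product decomposition and Nielsen–Schreier closing the argument.
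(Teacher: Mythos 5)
There is a genuine gap, and it sits exactly where you flag ``the hard part.'' Your strategy is to embed the finitely generated subgroup of $\mathcal{H}_1(\mathbb{A},\theta)$ into a free group (a free product of free groups coming from the tail factors, via \cite{Eda}) and then invoke Nielsen--Schreier. But the archipelago construction makes $\pi_1(\mathbb{A},\theta)$ a \emph{quotient} of the weak-join data, namely $\times^{\sigma}_{\aleph_0}G / *_{\aleph_0}G$ by \cite{ConHoj}: the attached disks kill every individual loop of every $X_i$, so there is no homomorphism from (a subgroup of) $\mathcal{H}_1(\mathbb{A},\theta)$ back into $\mathcal{H}_1(\widetilde{\bigvee}_{i\ge m}X_i,\theta)$ or into $\circledast_I\pi_1(X_i)$ induced by ``localizing to the tail''; elements of $\mathcal{H}_1(\mathbb{A},\theta)$ are homotopy classes \emph{in} $\mathbb{A}$, and the identifications forced by the disks are precisely what your proposed embedding would have to undo. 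Deferring this to a check that the disks ``do not impose relations'' is deferring the entire content of the theorem. Note also that groups of this type are locally free without embedding in any free group (compare $\mathbb{Q}$), so the shape of your argument --- find a free ambient group --- is not the one that succeeds here.

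The paper's proof runs in the opposite logical direction. It fixes an arbitrary word $w$ with $w([f^1],\dots,[f^l])=1$ and takes a concrete null-homotopy $H: w(f^1,\dots,f^l)\simeq c_\theta$ in $\mathbb{A}$. Using local strong contractibility of each $X_i$ it builds the explicit neighborhood basis $U_m=\big(\bigvee_{i<m}V_m^i\big)\vee\widetilde{\bigvee}_{i\ge m}X_i$ at $\theta$ and, by continuity of $H$, finds $K_m$ so that for $k\ge K_m$ the \emph{homotopy} $H|_{\mathbb{S}_k^1\times\mathbb{I}}$ (not merely the loops) can be pushed into $U_{m+1}$ and then retracted by $r$ onto $\widetilde{\bigvee}_{i\ge m}X_i$. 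Thus the given relation, for all large $k$, already holds in the weak join of the tail factors --- a space untouched by the disks, whose fundamental group is (locally) free for $m\ge M$ --- and from this it concludes the component loops $f^j|_{\mathbb{S}_k^1}$ are null-homotopic there, whence by \cite[Lemma 2.2]{BabMas1} each $[f^j]$ is trivial. The step your proposal is missing is precisely this: you truncate the generators at a common index $K$, but never transport the \emph{witnessing null-homotopy} of a relation into the weak join, and without that you have no control over what relations the disks impose.
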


\begin{proof}
Since $X_i$ is locally strongly contractible and first countable at $x_i$, for $i\in\mathbb{N}$, there exists a nested local basis $\{V_j^i\}_{j\in\mathbb{N}}$ at $x_i$ such that the inclusion mapping $V_j^i\hookrightarrow V_{j-1}^i$ is null homotopic to $x_i$ in $V_{j -1}^i$. Let $\{U_m\}_{m\in\mathbb{N}}$ be the  local basis at $x_\ast$ obtained by $U_m=({\bigvee}_{ i<m}V_m^i)\vee \widetilde{\bigvee}_{i \ge m} X_i$. Therefore, the inclusion map $U_{m+1} \hookrightarrow U_m$ is homotopic to the map retracting each point onto $x_i$ for $i <m$ and identity for the others.
The topology of archipelago spaces implies that the natural basis at the common point can be retracted onto the neighborhoods $\{U_m\}$.
Consider $M$ as the minimum number such that $\pi_1(X_i)$ is free for all $i \ge M$. Let $G = \langle \{[f^1], [f^2], \ldots, [f^m]\}\rangle$ be a subgroup of $\mathcal{H}_1(\mathbb{A}, \theta)$. Also let $w$ be a word such that $w([f^1], [f^2], \ldots, [f^l])$ is the trivial element. Hence there exists a homotopy mapping $H : w(f^1, \ldots, f^l) \simeq C_{\theta}$.
Since $H$ is continuous, for each $m \in \mathbb{N}$, there exists $K_m \in \mathbb{N}$ such that $im(H|_{\mathbb{S}_k^1 \times \mathbb{I}})$ can be considered homotopically as a subset of $U_{m+1}$ if $k \ge K_m$. Then, $im(w(f|_{\mathbb{S}_k^1}^1, f|_{\mathbb{S}_k^1}^2, \ldots, f|_{\mathbb{S}_k^1}^l)) \subseteq U_m$  for $k \ge K_m$. Thus $r (w(f|_{\mathbb{S}_k^1}^1, f|_{\mathbb{S}_k^1}^2, \ldots, f|_{\mathbb{S}_k^1}^l))$ induces a word of $1$-loops at $\theta$ in $\widetilde{\bigvee}_{i \ge m} X_i$ for $k \ge K_m$, where $r$ is the natural retraction onto the space $\widetilde{\bigvee}_{i \ge m} X_i$. Also, $r\circ H|_{\mathbb{S}_k^1 \times \mathbb{I}} : r (w(f|_{\mathbb{S}_k^1}^1, f|_{\mathbb{S}_k^1}^2, \ldots, f|_{\mathbb{S}_k^1}^l)) \simeq c_{\theta}$ in $\widetilde{\bigvee}_{i \ge m} X_i$ for $k \ge K_m$. Since $\pi_1(\widetilde{\bigvee}_{i \ge m} X_i)$ is locally free, all $1$-loops $f_{\mathbb{S}_k^1}^j$ are null-homotopic in $\widetilde{\bigvee}_{i \ge m} X_i$ for $1 \le j \le l$.
Now by using \cite[Lemma 2.2]{BabMas1}, one can obtain a continuous homotopy to make $f^j$ null-homotopic. It implies that if a finite word is trivial, then all of its generating elements are trivial, or equivalently the group $\mathcal{H}_1( \mathbb{A}, \theta)$ is locally free.
\end{proof}

In the following theorem, assume that each $X_i$ satisfies the assumption $2 \le card \big(\pi_1(X_i, x_i) \big) \le c$. Then we present the structure of the first Hawaiian homology group of some archipelago spaces by using the fact that this group is cotorsion.

\begin{theorem}\label{th4.2nn}
Let $\mathbb{A} = \mathbb{A}(\{X_i\})$ be the archipelago space on $\{X_i\}_{i \in \mathbb{N}}$, where each $X_i$ is locally strongly contractible. Also let $\pi_1(X_i)$ be free for all $i \in I$ except a finite number. Then
\[\mathbb{H}_1
(\mathbb{A}, x_*)
\cong
\sum_{c} \mathbb{Q} \oplus \prod_{p \in P} \widehat{\sum_{c} \mathbb{J}_p}^p
\cong
\frac{\prod_{\aleph_0} \mathbb{Z}}{ \sum_{\aleph_0} \mathbb{Z}}.
\]
\end{theorem}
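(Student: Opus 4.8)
The plan is to reduce the computation to the algebra of cotorsion groups through the Hurewicz-type identification of $\mathbb{H}_1$ with an abelianization, and then to pin down the resulting group by invariants. First I would verify the hypotheses of Theorem \ref{th2.10nn}: $\mathbb{A}$ is path-connected and, because each $X_i$ is locally strongly contractible, it is locally path-connected at the common point $x_* = \theta$. Hence Theorem \ref{th2.10nn} gives
\[
\mathbb{H}_1(\mathbb{A}, x_*) \cong \mathcal{H}_1(\mathbb{A}, x_*)/\big(\mathcal{H}_1(\mathbb{A}, x_*)\big)',
\]
so it suffices to analyze the abelianization of the Hawaiian group, for which two structural inputs are available: all $1$-loops at $x_*$ are small (\cite[Lemma 7]{HerHoj}) and $\mathcal{H}_1(\mathbb{A}, \theta)$ is locally free (Theorem \ref{th1.2}).

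Next I would establish that $\mathbb{H}_1(\mathbb{A}, x_*)$ is simultaneously cotorsion and torsion-free. Cotorsion is immediate: smallness of all $1$-loops and Lemma \ref{le0.1} make $\mathcal{H}_1(\mathbb{A}, x_*)$ Higman-complete, and the abelianization of a Higman-complete group is cotorsion by \cite[Theorem 3]{HerHoj}. For torsion-freeness I would use local freeness directly: if $n\,\cls\alpha = 0$, then $\alpha^n$ lies in the commutator subgroup, say $\alpha^n = \prod_j [a_j, b_j]$; the subgroup $H$ generated by $\alpha$ and the finitely many $a_j, b_j$ is finitely generated, hence free by Theorem \ref{th1.2}, and $\alpha^n \in H'$, so the image of $\alpha$ in the torsion-free group $H/H'$ vanishes, giving $\alpha \in H' \subseteq \big(\mathcal{H}_1(\mathbb{A}, x_*)\big)'$ and $\cls\alpha = 0$.

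With both properties in hand, I would invoke the structure theory of cotorsion groups: splitting $\mathbb{H}_1(\mathbb{A}, x_*)$ as its maximal divisible subgroup plus a reduced cotorsion complement, and using that both summands are torsion-free, Fuchs's classification \cite{Fuc2015} yields
\[
\mathbb{H}_1(\mathbb{A}, x_*) \cong \sum_{\kappa} \mathbb{Q} \oplus \prod_{p \in P} \widehat{\sum_{\kappa_p} \mathbb{J}_p}^p.
\]
It then remains to identify the cardinal invariants: the divisible rank $\kappa$ and the reduced ranks $\kappa_p = \dim_{\mathbb{Z}/p}\big(\mathbb{H}_1(\mathbb{A}, x_*)/p\,\mathbb{H}_1(\mathbb{A}, x_*)\big)$. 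An upper bound $\kappa, \kappa_p \le c$ follows from $|\mathbb{H}_1(\mathbb{A}, x_*)| = c$, since there are at most continuum many continuous maps $\mathbb{HE}^1 \to \mathbb{A}$ (both spaces are separable metrizable). The matching lower bounds I would obtain by exhibiting continuum-many independent divisible elements and continuum-many $\mathbb{Z}/p$-independent classes modulo $p$, built from the island loops of $\mathbb{A}$. Finally, the second displayed isomorphism with $\prod_{\aleph_0}\mathbb{Z}/\sum_{\aleph_0}\mathbb{Z}$ is read off from the known structure of $H_1(\mathbb{A}) = \prod_{\aleph_0}\mathbb{Z}/\sum_{\aleph_0}\mathbb{Z}$ (\cite[Theorem 8]{HerHoj}), whose cotorsion decomposition carries the same invariants.

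I expect the main obstacle to be this last step, namely forcing every cardinal to be exactly the continuum. The structure theorem supplies only the shape of the group, and the delicate work is the lower bounds $\kappa \ge c$ and $\kappa_p \ge c$: producing enough divisible elements and enough elements of infinite $p$-height. These must be extracted from the geometry of the archipelago, exploiting that adjacent islands are joined by the attached $2$-cells, so that infinite weighted sums of island loops persist in homology modulo finite sums; this is exactly what distinguishes $\mathbb{H}_1(\mathbb{A}, x_*)$ from the semilocally strongly contractible case of Corollary \ref{co2.6nn}.
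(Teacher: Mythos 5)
Your overall architecture matches the paper's: Hurewicz identification $\mathbb{H}_1(\mathbb{A},x_*)\cong Ab(\mathcal{H}_1(\mathbb{A},x_*))$ via Theorem \ref{th2.10nn}, cotorsion from smallness of loops plus Lemma \ref{le0.1} and \cite[Theorem 3]{HerHoj}, torsion-freeness from local freeness (your direct argument here is essentially the content of \cite[Lemma 12]{HerHoj} and is fine), the structure theorem for torsion-free algebraically compact groups, and then a cardinality computation. The upper bound $\le c$ is also handled in essentially the same spirit.

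The genuine gap is the lower bound $m_0, m_p \ge c$, which you yourself flag as ``the main obstacle'' and then leave as a plan to ``exhibit continuum-many independent divisible elements and continuum-many $\mathbb{Z}/p$-independent classes modulo $p$, built from the island loops.'' That is precisely the step that cannot be waved at: constructing divisible elements and elements of infinite $p$-height in the first homology of an archipelago is the hard content of Balcerzyk's and Herfort--Hojka's computations, and doing it by hand inside $\mathbb{H}_1$ would amount to redoing that work at the level of Hawaiian simplexes. The paper sidesteps the geometric construction entirely by an algebraic chain of epimorphisms: since all $1$-loops at $\theta$ are small, \cite[Corollary 4.3]{BabMas3} gives an epimorphism $\mathcal{H}_1(\mathbb{A},\theta)\to\prod_{\aleph_0}\pi_1(\mathbb{A},\theta)$; abelianizing and using \cite[Theorem A]{ConHoj} together with $Ab\bigl(\tfrac{\times^{\sigma}_{\aleph_0}G}{\ast_{\aleph_0}G}\bigr)\cong\tfrac{\prod_{\aleph_0}\mathbb{Z}}{\sum_{\aleph_0}\mathbb{Z}}$ from \cite[Theorem 8]{HerHoj} yields an epimorphism from $\mathbb{H}_1(\mathbb{A},x_*)$ onto $\prod_{\aleph_0}\tfrac{\prod_{\aleph_0}\mathbb{Z}}{\sum_{\aleph_0}\mathbb{Z}}$, whose invariants are already known to equal $c$ by \cite{Bal}; comparing invariants across the epimorphism forces $m_0=m_p=c$. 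Without either this epimorphism or an actual construction of the required elements, your proof does not close; everything up to ``the group has the shape $\sum_{\kappa}\mathbb{Q}\oplus\prod_{p}\widehat{\sum_{\kappa_p}\mathbb{J}_p}^p$ with $\kappa,\kappa_p\le c$'' is established, but the identification of the invariants, and hence both displayed isomorphisms, is not.
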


\begin{proof}
Since all $1$-loops at $\theta$ are small, by Lemma \ref{le0.1}, $\mathcal{H}_1(\mathbb{A}, x_*)$ is Higman-complete. Then by \cite[Lemma 2]{HerHoj}, $Ab(\mathcal{H}_1(\mathbb{A}, x_*))$ is Higman-complete. Thus by \cite[Theorem 3]{HerHoj}, $Ab(\mathcal{H}_1(\mathbb{A}, x_*))$ is cotorsion. Moreover, by Theorem \ref{th1.2}, $\mathcal{H}_1(\mathbb{A}, x_*)$ is locally free. It was shown that the abelianization of  locally free group is torsion-free \cite[Lemma 12]{HerHoj}. Therefore, the group $Ab(\mathcal{H}_1(\mathbb{A}, x_*))$ is a torsion-free cotorsion group, and then it is algebraically compact. Also, it is torsion-free algebraically compact, and then it is isomorphic to $\sum_{m_0} \mathbb{Q} \oplus \prod_{p \in P} \widehat{\sum_{m_p} \mathbb{J}_p}^p$ for some cardinalities $m_0$ and $m_p$ ($p \in P$) \cite[pp. 105,169]{Fuc}. Hence, it remains to obtain the cardinalities $m_0$ and $m_p$ ($p \in P$).
Moreover, since $X_i$ is locally strongly contractible, it is locally path-connected, and then the archipelago space $\mathbb{A}(X_i)$ is locally path-connected. Therefore by Theorem \ref{th2.10nn},
\[
\mathbb{H}_1(\mathbb{A}, x_*) \cong Ab \big( \mathcal{H}_1(\mathbb{A}, x_*) \big)
\cong
\sum_{m_0} \mathbb{Q} \oplus \prod_{p \in P} \widehat{\sum_{m_p} \mathbb{J}_p}^p.
\]
First, we show that $card \big(\mathbb{H}_1 (\mathbb{A}, x_*)\big) \le c$. Thus $m_0, m_p \le c$,  where $c$ is the continuum cardinality and $p$ is a prime.
Let $X = \widetilde{\bigvee}_{i \in I} X_i$. One can generalize the proof of \cite[Theorem 2.8]{MasMir} for the homomorphism $i_*: \mathcal{H}_1 (X, x_*)) \to \mathcal{H}_1 (\mathbb{A}, x_*)$, induced by the inclusion, to be an epimorphism. Therefore, we have $card \big(\mathcal{H}_1(X, x_*)\big) \ge card \big(\mathcal{H}_1(\mathbb{A}, x_*)\big)$. By \cite[Theorem 2.9]{BabMas1}, $card \big(\mathcal{H}_1(X, x_*)\big) \le \prod_{i\in \mathbb{N}} card \big(\pi_1(X_i, x_i)\big) \le c^{\aleph_0}= c$. Thus $card \big( \mathcal{H}_1(\mathbb{A}, x_*)\big) \le c$. Moreover, for an arbitrary pointed space $(X, x_0)$, the group $\pi_1(X, x_0)$ can be considered as a subgroup of $\mathcal{H}_1(X, x_0)$. It implies that $card\big(\mathcal{H}_1(X, x_0)\big) \ge card \big( \pi_1(X, x_0) \big)$. We know that $card\big(\pi_1(\mathbb{A}, x_*) \big) = c$ and then $card \big( \mathcal{H}_1(\mathbb{A}, x_*) \big) \ge c$.
Therefore $card \big( \mathcal{H}_1(\mathbb{A}, x_*) \big) = c$, and then we have $card \big(\mathbb{H}_1(\mathbb{A}, x_*) \big) \le c$, by Theorem \ref{th2.10nn}.
Thus $m_0, m_p \le c$.
Now by an epimorphism we show that $m_0, m_p \ge c$.

Since all $1$-loops at $\theta$ are small, by \cite[Corollary 4.3]{BabMas3}, there is an epimorphism $\varphi: \mathcal{H}_1(\mathbb{A}, \theta) \to \prod_{\aleph_0} \pi_1(\mathbb{A}, \theta)$.  Also there is an  isomorphism $\pi_1(\mathbb{A}, \theta) \to \frac{\times^{\sigma}_{\aleph_0} G}{*_{\aleph_0} G}$, where $G$ is either $\mathbb{Z}$ or $\mathbb{Z}_2$ by \cite[Theorem A]{ConHoj}.
Thus there is  an epimorphism $\mathcal{H}_1(\mathbb{HA}, \theta) \to \prod_{\aleph_0} \frac{\times^{\sigma}_{\aleph_0} G}{*_{\aleph_0} G}$.
This induces the epimorphism  $Ab \big(\mathcal{H}_1(\mathbb{A}, \theta) \big) \to Ab \big(\prod_{\aleph_0} \frac{\times^{\sigma}_{\aleph_0} G}{*_{\aleph_0} G} \big)$.
Moreover, for any family of groups $\{G_i\}_{i \in I}$, there is an epimorphism $Ab \big( \prod_{i \in I} G_i \big) \to \prod_{i \in I} Ab(G_i)$.
Since $ \mathbb{H}_1(\mathbb{A}, x_*) \cong Ab \big(\mathcal{H}_1(\mathbb{A}, \theta) \big)$, there is an epimorphism $\mathbb{H}_1(\mathbb{A}, x_*) \to \prod_{\aleph_0} Ab \big(  \frac{\times^{\sigma}_{\aleph_0} G}{*_{\aleph_0} G}\big)$. Also, $Ab \big( \frac{\times^{\sigma}_{\aleph_0} G}{*_{\aleph_0} G}\big)$ is isomorphic to $\frac{\prod_{\aleph_0} \mathbb{Z}}{ \sum_{\aleph_0} \mathbb{Z}}$ if $G$ is either $\mathbb{Z}$ or $\mathbb{Z}_2$ by \cite[Theorem 8]{HerHoj}. Hence, we have an epimorphism $\mathbb{H}_1(\mathbb{A}, x_*) \to \prod_{\aleph_0} \frac{\prod_{\aleph_0} \mathbb{Z}}{ \sum_{\aleph_0} \mathbb{Z}}$. By \cite[Corollary 42.2]{Fuc}, $\frac{\prod_{\aleph_0} \mathbb{Z}}{ \sum_{\aleph_0} \mathbb{Z}}$ is algebraically compact and it is isomorphic to
$\sum_{c} \mathbb{Q} \oplus \prod_{p \in P} \widehat{\sum_{c} \mathbb{J}_p}^p$ by \cite{Bal}.
Then there is an epimorphism $\mathbb{H}_1(\mathbb{A}, x_*) \to \prod_{\aleph_0} \sum_{c} \mathbb{Q} \oplus \prod_{p \in P} \widehat{\sum_{c} \mathbb{J}_p}^p$. Equivalently there is an epimorphism $\sum_{m_0} \mathbb{Q} \oplus \prod_{p \in P} \widehat{\sum_{m_p} \mathbb{J}_p}^p \to \prod_{\aleph_0} \bigg( \sum_{c} \mathbb{Q} \oplus \prod_{p \in P} \widehat{\sum_{c} \mathbb{J}_p}^p \bigg)$. Therefore, $m_0, m_p \ge c$, and then $m_0, m_p = c$ for $p \in P$.
Hence $\mathbb{H}_1(\mathbb{A}, x_*) \cong  \sum_{c} \mathbb{Q} \oplus \prod_{p \in P} \widehat{\sum_{c} \mathbb{J}_p}^p$, and by \cite[Corollary 42.2]{Fuc}, $\mathbb{H}_1(\mathbb{A}, x_*) \cong \frac{\prod_{\aleph_0} \mathbb{Z}}{ \sum_{\aleph_0} \mathbb{Z}}$.
%
%
%
%
%
%
%
\end{proof}

Herfort and  Hojka \cite{HerHoj} proved that the first singular homology group of any archipelago space is isomorphic to $\frac{\prod_{\aleph_0} \mathbb{Z}}{ \sum_{\aleph_0} \mathbb{Z}}$. Now by Theorem \ref{th4.2nn}, the first singular homology and the first Hawaiian homology groups are isomorphic for archipelago spaces. It does not hold for many classes of spaces; for instance, see Example \ref{ex0.7}.

\section*{Acknowledgment}
This research was supported by a grant from Ferdowsi University of Mashhad--Graduate Studies (No. 2/57128). 


\begin{thebibliography}{99}

\bibitem{Ada}
J.F. Adamas, \textit{Stable homotopy and generalised homology}, The university of Chicago press, Chicago, 1974.


\bibitem{BabMas1} A. Babaee, B. Mashayekhy and H. Mirebrahimi, On Hawaiian groups of some topological spaces, \textit{Topology Appl.}  {\bf 159} (8) (2012) 2043--2051.

\bibitem{BabMas2}
A. Babaee, B. Mashayekhy, H. Mirebrahimi and H. Torabi, On a van Kampen Theorem for Hawaiian groups, \textit{Topology  Appl.} \textbf{241} (2018) 252--262.

\bibitem{BabMas3}
 A. Babaee, B. Mashayekhy, H. Mirebrahimi, H. Torabi, M. Abdullahi Rashid, S.Z. Pashaei, On topological homotopy groups and its relation to Hawaiian groups,  \textit{Haccetepe Journal of Mathematics}, doi:10.15672/HJMS.565367.

\bibitem{Bal}
S. Balcerzyk, On factor groups of some subgroups of a complete direct sum of
infinite cyclic groups, \textit{Bull. Acad. Polon. Sci. S\'{e}r. Sci. Math. Astr. Phys.} {\bf 7} (1959),
141--142.
%

\bibitem{BarMil}
M.G. Barratt and J. Milnor, An example of anomalous singular homology, \textit{Proceedings of the American Mathematical Society} \textbf{13} (2) (1962) 293--297.









%




%

\bibitem{ConHoj}
G.R. Conner, W. Hojka and M. Meilstrup, Archipelago groups, \textit{Proc. Amer. Math. Soc.} 143 (2015), 4973--4988.






%



\bibitem{Eda91}
K. Eda, The first integral singular homology groups of one point unions, \textit{Quart. J. Math. Oxford Ser.} (2) 42 (1991), 443--456.

\bibitem{Eda}
K. Eda, Free $\sigma$ products and noncommutatively slender groups, \textit{Journal of Algebra}  {\bf 148} (1992) 243--263.











%
%
%


\bibitem{EdaKaw}
K. Eda and K. Kawamura, Homotopy and homology groups of the $n$-dimensional Hawaiian Earring,
\textit{Fundamenta Mathematicae}  {\bf 165} (1) (2000) 17--28.



%


\bibitem{Fuc}
L. Fuchs, Infinite Abelian Groups I, Academic Press, New York, 1970.


\bibitem{Fuc2015}
L. Fuchs, \textit{Abelian Groups}, Springer-Varlage, New York, 2015.

%


\bibitem{Ghr}
R. Ghrist, Three examples of applied and computational homology, \textit{Nieuw Arch. Wiskd.} (5), 9(2):122--125, 2008.



%
%

\bibitem{HerHoj}
W. Herfort and W. Hojka, Cotorsion and wild homology, \textit{Israel J. Math.} {\bf 221} (2017) 275-290.



\bibitem{KarRep}  U.H. Karimov and  D. Repov\v{s}, Hawaiian groups of topological spaces (Russian),  \textit{Uspekhi. Mat.
Nauk.}  {\bf 61} (5) (2006) 185--186; transl. in \textit{Russian Math. Surv.} {\bf 61} (5) (2006) 987--989.



\bibitem{KarRep3}
U.H. Karimov and  D. Repov\v{s}, On the homology of the Harmonic archipelago, \textit{Central European Journal of Mathematics} {\bf 10} (2012) 863--872.
%


\bibitem{MasMir}
B. Mashayekhy, H. Mirebrahimi, H. Torabi and A. Babaee, On small $n$-Hawaiian loops, {\it Mediterranean Journal of Mathematics}, in press.



\bibitem{Mas}
W.S. Massey, \textit{Singular Homology Theory}, Springer-Verlag, New York, 1980.
%










%
\bibitem{Rot}
J.J. Rotman, {\it An Introduction to Algebraic Topology}, Springer, New York, 1988.


\bibitem{Spa}
E. H. Spanier, \textit{Algebraic Topology}, McGraw-Hill, New York, 1966.

%










%












%
%
%





\end{thebibliography}
\end{document}